\newtheorem{theorem}{Theorem}[section]
\newtheorem{lemma}[theorem]{Lemma}
\newtheorem{definition}{Definition}[section]
\title{The turnpike property for high-dimensional interacting agent systems in discrete time}
\date{}
\author{
Martin Gugat\footnote{Chair in Dynamics, Control, Machine Learning  and  Numerics - Alexander von Humboldt Professorship, Friedrich-Alexander Universit\"at Erlangen-N\"urnberg (FAU), Cauerstr. 11, 91058 Erlangen, Germany; \texttt{martin.gugat@fau.de}}, 
$\;$
Michael Herty\footnote{Institut für Geometrie und Praktische Mathematik, RWTH Aachen University, Templergraben 55, D-52062 Aachen, Germany; \texttt{herty@igpm.rwth-aachen.de}, \texttt{segala@igpm.rwth-aachen.de}},
$\;$
Jiehong Liu\footnote{Department of Mathematics Guido Castelnuovo, Sapienza University of Rome, Piazzale Aldo Moro 5, 00185 Roma, Italy; \texttt{jiehong.liu@uniroma1.it}},
$\;$
Chiara Segala$^\dagger$
}
\begin{document}
	
\maketitle

\begin{abstract}
We investigate the interior turnpike phenomenon for discrete-time multi-agent optimal control problems.
%governed.
%The time-discrete problems are first order discretizations of a high-dimensional system of ordinary differential equations. 
While for continuous systems the turnpike property has been established, we focus here on  first-order discretizations of such systems. It is shown that the  resulting time-discrete system inherits the turnpike property with  estimates
of the same type  as in the continuous case. In particular,  we prove that the discrete time optimal control problem is strictly dissipative and the cheap control assumption holds.  
\end{abstract}

\paragraph{Keywords:} {turnpike property, time-discrete systems, multi-agent systems, optimal control}

\paragraph{AMS Classification:} {93C55\ \textbullet\ 93A16\ \textbullet\ 49M25\ \textbullet\ 65K10}

\section{Introduction}\label{sec:Introduction}
The collective behavior of possibly actively interacting agents, also called active particle systems, 
has been the subject of mathematical investigations
%Mathematical models for 
%the collective behavior of possibly actively interacting agents, also called active particle systems
%have been discussed 
for example in \cite{MR3642940,  MR3969953}.
%in the mathematical literature. 
Most of 
the
%these
models are governed by a system of ordinary differential equations, which can express different interaction rules between agents, such as attraction or repulsion, and in the case of active particles a control input.  Those models have been applied to several fields, such as biology, engineering, economy, and sociology, see e.g.\cite{application-[15]}\cite{application-[14]}\cite{application-[6]}\cite{application-[4]}\cite{application-[29]}. While contributions to the modeling and analysis have been numerous, the interest in control mechanism is more recent, see \cite{kinetic,sparse,sparse-Cucker}.
Typically,  the action of the control is used as  an exterior influence to enforce the emergence of the desired asymptotic target behavior of the agents. The control of high-dimensional, interacting  agents can be achieved in several ways, e.g. using Riccati-based approaches \cite{MR3431287,MR4469721}, moment-driven control \cite{MR4399019}, or model predictive control approaches \cite{MR3894072,MR3351435,MR4288153}. Those approaches have been followed to reduce the possible high computational costs.

Here, as in \cite{main}, we consider turnpike controls for such agent systems as an effective strategy to devise controls for high-dimensional systems.
The turnpike phenomenon describes the fact that,
in an interior subinterval  of the time interval, 
the dynamic control can be 
%replaced 
approximated to arbitrarily precision  
by the control of the (cheaper) static problem for a sufficiently large time horizon, and this reduces the computational effort as shown e.g. in  \cite{use-1,use-1.1,MR3973342}. The turnpike property has also been used to show that model-predictive control strategy converges, see e.g.  \cite{receding,use-2,MR4402854}.
While in \cite{main} the turnpike property of continuous agent systems has been investigated, we consider now the discrete-time dynamics. For discrete-time  control problems, the turnpike property has been established in \cite{turnpike-1,turnpike-2,turnpike-3,MR4599948,gruene2018turnpike}.
Contrary to those approaches, we exploit the particular structure of interacting agent systems to derive the turnpike property with interior decay \cite{interior-original, gugat2023optimal}, or the interval turnpike property \cite{MR4493557}.
\par 
The general continuous trajectories $\psi_{k}=\psi_{k}(t) \in \mathbb{R}^d$ of $k=1,\dots,N$ interacting agents in the time interval $[t_{0},T]$ are given by the following system 
\begin{equation}
\begin{aligned}
&\frac{d}{dt} \psi_{k}=\frac{1}{N}\left( \sum_{l=1}^{N}P(\psi_{k},\psi_{l})(\psi_{l}-\psi_{k}) \right)+u_{k},\quad  \psi_{k}(t_{0})=\psi_{k}^{0}.
\end{aligned}
\label{1}
\end{equation}
The data $\psi_{k}^{0} \in \mathbb{R}^d$ is the initial state of agent $k$ and $P(\cdot,\cdot)$ is a nonlinear interaction kernel representing attraction and/or repulsive forces, see e.g. \cite{attraction,pureattraction} for examples. The control $u_k=u_k(t)$ acts on each agent individually in order to minimize e.g. a  tracking-type cost towards a desired state $\bar{\psi}$ and  Tykhonov regularization for some $\gamma>0:$
\begin{equation}
J_{N}(u)=\int_{t_{0}}^{T}\frac{1}{N}\left( \sum_{k=1}^{N}\Vert\psi_{k}-\bar{\psi}\Vert^{2}+\gamma\Vert u_{k}\Vert^{2} \right) dt.
\label{2}
\end{equation}
Here and in the following we denote by $\Vert \cdot \Vert$ the Euclidean norm on $\mathbb{R}^d.$ The turnpike properties of the problem \eqref{1}--\eqref{2} have been studied in \cite{main} showing that they are independent of the number of agents $N.$ In this paper, we consider the corresponding time-discrete problem, see Section \ref{sec2} for a detailed description. In particular, we show that the turnpike property is independent of the discretization time-step $h$. Further,  the time discrete system allows for a turnpike with interior decay independent of the number of agents $N$. The proof of the interior turnpike property requires a dynamic programming principle, a strict dissipativity inequality, and the existence of a cheap control. Those results are established for the aforementioned system in Section \ref{sec3} and Section \ref{sec4}, respectively. Furthermore, the consistency in the limit for the vanishing discretization time-step is discussed in Section \ref{sec7}. Finally, numerical examples are presented in Section \ref{sec8}.

%------------------BEGIN---------------------------------------- 
\section{The discrete-time optimal control problem}\label{sec2}

 Consider a discretization $t_{0}<t_{1}<...<t_{M}=T$ of the time interval $[t_{0},T]$, where $t_{i+1}-t_{i}=h$ for $i=0,..,M-1$ and for the stepsize
 we have $h=\frac{T-t_{0}}{M}.$ The system of ordinary differential equations \eqref{1} is discretized using an explicit Euler method. The control of agent $k$ is piecewise constant on $t_{i+1}-t_{i}$ and denoted by  $u^{i}_{k}=u_{k}$. Furthermore, let  
  $ \psi^{i}_{k}=\psi_{k}(t_{i})$ and let the initial data be given by $\psi_{k}(t_{0})$. The discrete system is then given by 
\begin{equation}
\begin{aligned}
&\psi_{k}^{i+1}=\psi_{k}^{i}+hf(\psi_{k}^{i},u_{k}^{i}), \qquad i=0,\dots,M-1, \; k=1,\dots,N,\\
&\psi_{k}^{0}=\psi_{k}(t_{0}). 
\end{aligned}
\label{3}
\end{equation}
for  
\begin{equation}
%f_{k}^{i}=
f(\psi_{k}^{i},u_{k}^{i})=\frac{1}{N}{\sum_{l=1}^{N}P(\psi_{k}^{i},\psi_{l}^{i})(\psi_{l}^{i}-\psi_{k}^{i})}+u_{k}^{i}.
\label{4}
\end{equation}
For the interaction kernel $P:\mathbb{R}^d \times \mathbb{R}^d \to \mathbb{R}$ we assume 
\begin{align}\label{assumption}
   P \in C^1(\mathbb{R}^{d\times d}), \; P(0,0) =0, \;     \| P \|_\infty \leq 0.
\end{align}
Furthermore, we denote by  $\psi^{i}=(\psi^{i}_{1},\psi^{i}_{2},\dots,\psi^{i}_{N})\in\mathbb{R}^{N\times d}$ 
the state of all agents at time  $t_i$ for $i=0,\dots,M-1$, by $\psi=\left(  \psi^i_k \right)_{i=0,\dots,M-1, k=1,\dots,N}$ and similarly for $u.$  We define the following norm of the state $\psi^i$  at the discrete time $t_i$ as 
\begin{align}\label{norm}
	\| \psi^i \|_N^2 := \sum\limits_{k=1}^N \| \psi_k^i \|^2.
\end{align}
The cost functional \eqref{2} is discretized using a first-order integration method leading to the time-discrete optimal control problem 
\begin{equation}%-----------------(5)
\begin{aligned}
Q(t_{0},T,h,N,\psi^{0})&: \,  \min_{u}J_{N}^{(h,t_{0},T)}(\psi,u), \\ \text{ for} \enspace J_{N}^{(h,t_{0},T)}(\psi,u) &:=\sum_{i=0}^{M-1} \, \frac{h}{N} 
\left(  \| \psi^i - \bar{\psi} \|^2_N + \gamma \| u^i \|^2_N
\right).
\end{aligned}
\label{5}
\end{equation}
The problem \eqref{5} is a parametric optimization problem with parameters $t_0,T$ and discretization size $h.$ In the following we assume that those parameters are chosen s.t. there exists $M \in \mathbb{N}$ with 
\begin{equation}\label{M}
    M = \frac{T-t_0}h.
\end{equation}
    
The optimal value of the  time-discrete optimal control problem is denoted by 
\begin{align}\label{v8}
    v_{N}^{(h,t_{0},T)}(\psi^{0}):= \min_{u}J_{N}^{h,t_{0},T}(\psi,u).
\end{align}
and the optimal state and control are denoted by $\hat{\psi}\in\mathbb{R}^{M\times N\times d}$ and $\hat{u}\in\mathbb{R}^{M\times N\times d},$ respectively. Further, we introduce the running cost 
\begin{align}\label{g}
    g(\psi^{i},u^{i}):=\frac{1}{N}  \left( \Vert \psi^i - \bar{\psi} \Vert^2_N + \gamma \| u^i \|_N^2
    \right).
\end{align}

Next, we establish the dynamic programming principle for the discrete in-time 
optimal control problem. 

\begin{lemma}
Let $(\hat{\psi}, \hat{u})$ be the optimal state-control pair for the optimal control problem $Q(t_{0},t_{L},h,N,\psi^{0})$ for some $t_0<t_L < T.$ 

Then,  $\hat{\psi}\mid _{(a,L)}=(\hat{\psi}^{a}, \hat{\psi}^{a+1},\dots, \hat{\psi}^{L})$ and $\hat{u}\mid _{(a,L)}=(\hat{u}^{a}, \hat{u}^{a+1},\dots, \hat{u}^{L})$ is the optimal solution to the problem $Q(t_{a},t_{L},h,N,\hat{\psi}^{a})$ for any $t_0<t_a<t_L$.
\end{lemma}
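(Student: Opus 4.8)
The plan is to establish the optimality of the restricted pair $(\hat\psi|_{(a,L)}, \hat u|_{(a,L)})$ on the subinterval via a standard dynamic programming (Bellman) argument, decomposing the total cost additively and deriving a contradiction from any improvement on the tail. Because the running cost $g$ sums over the discrete time indices $i$ without any coupling between distinct time steps beyond the state recursion \eqref{3}, the objective $J_N^{(h,t_0,t_L)}$ splits cleanly as a sum over $i=0,\dots,a-1$ plus a sum over $i=a,\dots,L-1$. This additive separability is the structural feature that makes the principle work, so I would first record it explicitly.

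First I would write
\begin{equation*}
J_N^{(h,t_0,t_L)}(\psi,u) = \sum_{i=0}^{a-1}\frac{h}{N}\Bigl(\|\psi^i-\bar\psi\|_N^2 + \gamma\|u^i\|_N^2\Bigr) + \sum_{i=a}^{L-1}\frac{h}{N}\Bigl(\|\psi^i-\bar\psi\|_N^2 + \gamma\|u^i\|_N^2\Bigr),
\end{equation*}
and observe that the second sum is exactly $J_N^{(h,t_a,t_L)}$ evaluated on the trajectory started from the state $\psi^a$. The key point is that the first sum depends only on the controls $u^0,\dots,u^{a-1}$ and the resulting states $\psi^0,\dots,\psi^{a-1}$, and that these in turn fix $\psi^a$ through the forward recursion \eqref{3}, independently of the later controls $u^a,\dots,u^{L-1}$.

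Next I would argue by contradiction. Suppose $(\hat\psi|_{(a,L)}, \hat u|_{(a,L)})$ were \emph{not} optimal for $Q(t_a,t_L,h,N,\hat\psi^a)$; then there would exist an admissible pair $(\tilde\psi,\tilde u)$ on $[t_a,t_L]$ with initial state $\tilde\psi^a=\hat\psi^a$ and strictly smaller tail cost, i.e. $J_N^{(h,t_a,t_L)}(\tilde\psi,\tilde u) < J_N^{(h,t_a,t_L)}(\hat\psi|_{(a,L)}, \hat u|_{(a,L)})$. I would then splice this cheaper tail onto the original head: define a control on the full interval $[t_0,t_L]$ by keeping $\hat u^i$ for $i<a$ and using $\tilde u^i$ for $i\geq a$. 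Since $\tilde\psi^a=\hat\psi^a$ and the dynamics \eqref{3} are deterministic, the spliced trajectory agrees with $\hat\psi$ up to index $a$ and with $\tilde\psi$ thereafter, so it is a genuine admissible trajectory for $Q(t_0,t_L,h,N,\psi^0)$. Using the additive decomposition, its total cost equals the unchanged head cost plus the strictly smaller tail cost, hence is strictly less than $J_N^{(h,t_0,t_L)}(\hat\psi,\hat u)$, contradicting the optimality of $(\hat\psi,\hat u)$ for $Q(t_0,t_L,h,N,\psi^0)$.

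The argument is essentially bookkeeping, so I do not expect a genuine obstacle; the only point requiring care is the \emph{gluing} step, namely verifying that the concatenated control produces an admissible state trajectory. This rests precisely on the matching of initial conditions $\tilde\psi^a=\hat\psi^a$ together with the deterministic one-step update \eqref{3}, which guarantees that the forward evolution from $t_a$ is uniquely determined by the state at $t_a$ and the subsequent controls. I would make this continuation explicit to ensure the spliced pair is feasible, and note that existence of the competitor tail $(\tilde\psi,\tilde u)$ is not in question since $Q(t_a,t_L,h,N,\hat\psi^a)$ is itself a well-posed finite-horizon problem of the same type.
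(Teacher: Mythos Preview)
Your argument is correct and is essentially the same as the paper's: both proceed by contradiction, splice a strictly cheaper tail onto the original head using the additive decomposition of the cost and the deterministic one-step recursion \eqref{3}, and obtain a trajectory beating $(\hat\psi,\hat u)$ on $[t_0,t_L]$. Your write-up is in fact more explicit about the cost decomposition and the gluing step than the paper's version.
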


\begin{proof}
We proceed by contradiction. Note that $\psi^{i+1}_k$ is uniquely determined for given state $\psi^i_k$ and control $u^i_k$ by equation \eqref{3}.   Assume that  $\hat{\psi}\mid _{(a,L)}$ and $\hat{u}\mid _{(a,L)}$ is not the optimal solution to the problem $Q(t_{a},t_{L},h,N,\hat{\psi}^{a})$ and denote the optimal state and control by
$(\overline{\psi},\overline{u})$ where $\overline{\psi}=(\overline{\psi}^{0},\overline{\psi}^{1},\dots,\overline{\psi}^{L-a})$ and $\overline{u}=(\overline{u}^{0},\overline{u}^{1},\dots,\overline{u}^{L-a})$ for $\overline{\psi}^{0}=\hat{\psi}^{a}$. By assumption,   $(\overline{\psi},\overline{u})\not =  (\hat{\psi}\mid _{(a,L)},\hat{u}\mid _{(a,L)})$. Consider  the control $(\hat{u}\mid _{(0,a-1)}, \overline{u})$ and the corresponding state $(\hat{\psi}\mid _{(0,a-1)}, \overline{\psi})$ that has  a smaller objective value for the problem $Q(t_{0},t_{L},h,N,\psi^{0})$. This contradicts the optimality of $(\hat{\psi},\hat{u})$. 
%Since $\psi^{i+1}_k$ is uniquely determined for given state $\psi^i_k$ and control $u^i_k$ by equation \eqref{3},
\end{proof}   

We introduce now the
%discrete 
%time 
\emph{static control problem}. The static state of agent $k=1,\dots,N$ is denoted by $\psi^{(\sigma)}_k$ and the control by $u_k^{(\sigma)}$. The problem reads 
 \begin{equation}\label{10}
 \min\limits_{ \left( u_1^{(\sigma)}, \dots, u_N^{(\sigma)}  \right) } \frac{1}{N}  \left({\Vert \psi^{(\sigma)}-\bar{\psi}\Vert^{2}_N +\gamma\Vert u^{(\sigma)}\Vert_N^{2}}\right),
 \end{equation}
 subject to the stationary dynamics
 \begin{equation}\label{11}
 \frac{1}{N}
 \,\sum_{l=1}^{N}
 \left(
  P(\psi_{k}^{(\sigma)},\psi_{l}^{(\sigma)})(\psi_{l}^{(\sigma)}-\psi_{k}^{(\sigma)})+u_{k}^{(\sigma)}
 \right)
 =0.
 \end{equation}
Due to the particular structure of the agent-based dynamics, the optimal 
solution $(\psi_k^{*,(\sigma)},u^{*,(\sigma)}_k )_{k=1}^N$ to \eqref{10}--\eqref{11} is given by 
\begin{align}\label{stationary solution}
    \psi_k^{*,(\sigma)} = \bar\psi \ \mbox{ and } \ u^{*,(\sigma)}_k = 0.
 \end{align}
 Furthermore, if  we consider the dynamics \eqref{3}, the initial condition $\psi_{k}^{0}=\psi_{k}^{*,(\sigma)}$ and the control $u_{k}^{i}= u^{*,(\sigma)}_k$ for $k=1,\dots,N$, then we obtain  $\psi_{k}^{i}=\psi_{k}^{*,(\sigma)}$ for $k=1,\dots,N$ and $i \geq 1$.

In the following, we establish the turnpike property with interior decay in the sense of the  Definition \ref{def:turnpike}. We refer to \cite{main} for the continuous-time formulation.  The  turnpike property formalizes that an optimal solution of the {\em dynamic} optimal control problem is close to the  optimal solution of the corresponding {\em  static } problem. 
 
\begin{definition}\label{def:turnpike}
The optimal control problem $Q(t_{0},T,h,N,\psi^{0})$ 
%given by equation 
defined in \eqref{5}  has the turnpike property with interior decay,  if there exist constants $\tilde{C}_{1}>0$,  $\lambda \in (0,1)$ and a monotone increasing non-negative function $\alpha$ with $\alpha(0)=0$,  such that for all $ T > t_0$ and $M$ given by equation \eqref{M} the following 
inequality holds true: 
\begin{equation}
\sum_{i=\left \lfloor (1-\lambda)M\right \rfloor}^{M-1}   \; h \; \alpha\left(  \Vert {\psi}^{i}-\psi^{(\sigma)}\Vert_N +\Vert {u}^{i}-u^{(\sigma)} \Vert_N   \right) \le \tilde{C}_1
\label{19}
\end{equation}
\end{definition}
Note that the constant $\tilde{C_1}$ can possibly depend on $h.$ This dependence will be studied in Section \ref{sec7}.

Our main result is the following theorem. It will be proven in the subsequent sections establishing first a dissipativity inequality and then showing that the problem is cheaply controllable. The final proof of the following theorem is deferred to Section~\ref{sec6}.

 \begin{theorem}\label{thm1}
Under assumption \eqref{assumption}, the optimal solution to $Q(t_{0},T,h,N,\psi^{0})$ has the turnpike property with interior decay in the sense of Definition \ref{def:turnpike}.
\end{theorem}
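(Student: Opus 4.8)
The plan is to follow the standard turnpike scheme: establish strict dissipativity of the running cost at the static optimum, prove cheap controllability (a $T$-uniform bound on the optimal value), and use the dynamic programming principle of the preceding Lemma to confine the deviation to the terminal window $[t_m,T]$ with $m=\lfloor(1-\lambda)M\rfloor$. I use throughout that, by \eqref{stationary solution}, the static optimum is $(\psi^{(\sigma)},u^{(\sigma)})=(\bar\psi,0)$, that this pair is an equilibrium of \eqref{3} since $f(\bar\psi,0)=0$, and that the running cost \eqref{g} vanishes there, i.e.\ $g(\psi^{(\sigma)},u^{(\sigma)})=0$.

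First I would record the dissipativity inequality, which is essentially free here because the running cost \eqref{g} is already positive definite in the deviation from the turnpike. With the trivial storage function $S\equiv 0$ and the elementary bound $a^2+b^2\ge\tfrac12(a+b)^2$ one gets
\begin{equation*}
g(\psi^i,u^i)-g(\psi^{(\sigma)},u^{(\sigma)})=\frac1N\left(\|\psi^i-\bar\psi\|_N^2+\gamma\|u^i\|_N^2\right)\ge \alpha\!\left(\|\psi^i-\psi^{(\sigma)}\|_N+\|u^i-u^{(\sigma)}\|_N\right),
\end{equation*}
with $\alpha(r)=\tfrac{\min(1,\gamma)}{2N}\,r^2$, a monotone increasing nonnegative function vanishing at $0$ as required by Definition \ref{def:turnpike}. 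Summing the stage costs $h\,g$ along the optimal pair $(\hat\psi,\hat u)$ then yields $\sum_{i=0}^{M-1}h\,\alpha(\cdots)\le J_N^{(h,t_0,T)}(\hat\psi,\hat u)=v_N^{(h,t_0,T)}(\psi^0)$.

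Second I would prove \emph{cheap controllability}: from any initial state $\xi$ and any horizon, $v_N^{(h,\tau,T)}(\xi)\le C_0(\xi)$ with a bound independent of $T$ and with $C_0(\xi)\to0$ as $\xi\to\bar\psi$. Since the control is unconstrained, I construct an explicit admissible control steering the state to $\bar\psi$ in finitely many steps and then setting the control to $0$ to remain at $\bar\psi$ by \eqref{stationary solution}; the remaining phase costs nothing, so the total cost equals the finite, $T$-independent transient cost. The one-step choice inverting \eqref{3}, $u_k=\tfrac1h(\bar\psi-\xi_k)-\tfrac1N\sum_{l=1}^N P(\xi_k,\xi_l)(\xi_l-\xi_k)$, gives a closed form that vanishes as $\xi\to\bar\psi$; a variant reaching $\bar\psi$ over a fixed physical time keeps $C_0$ bounded as $h\to0$ for the later consistency analysis. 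Combined with the dissipativity sum, the case $\xi=\psi^0$ already gives \eqref{19} with $\tilde C_1=C_0(\psi^0)$ for any $\lambda\in(0,1)$.

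To make the \emph{interior} decay constant uniform as $T\to\infty$, I would finally invoke the preceding Lemma. From $\sum_{i=0}^{m-1}\tfrac{h}{N}\|\hat\psi^i-\bar\psi\|_N^2\le v_N^{(h,t_0,T)}(\psi^0)\le C_0(\psi^0)$ and a pigeonhole argument over the window $[t_0,t_m]$, whose length is $(1-\lambda)(T-t_0)$, there is an index $i^\ast\le m$ at which $\|\hat\psi^{i^\ast}-\bar\psi\|_N$ is small. By the dynamic programming principle the restriction of $(\hat\psi,\hat u)$ to $[t_{i^\ast},T]$ is optimal for $Q(t_{i^\ast},T,h,N,\hat\psi^{i^\ast})$, so cheap controllability gives $v_N^{(h,t_{i^\ast},T)}(\hat\psi^{i^\ast})\le C_0(\hat\psi^{i^\ast})$, which is small; since $i^\ast\le m$, the dissipativity sum on this larger window dominates the tail,
\begin{equation*}
\sum_{i=m}^{M-1}h\,\alpha(\cdots)\le\sum_{i=i^\ast}^{M-1}h\,\alpha(\cdots)\le v_N^{(h,t_{i^\ast},T)}(\hat\psi^{i^\ast})\le C_0(\hat\psi^{i^\ast}),
\end{equation*}
so \eqref{19} holds with a $\tilde C_1$ independent of $T$. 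The main obstacle is exactly this accounting: securing a cheap-control bound that is genuinely independent of $T$ with controlled dependence on $h$ and $N$ (the latter governing the $N$-uniformity advertised in the introduction). The dissipativity step, by contrast, is immediate, because the tracking cost itself penalizes the deviation from the equilibrium $(\bar\psi,0)$.
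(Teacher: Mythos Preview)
Your proposal is correct and follows essentially the same scheme as the paper: strict dissipativity with the trivial storage $S\equiv0$ and a quadratic $\alpha$, a cheap-control bound obtained from an explicit feedback cancelling the interaction term, and then a pigeonhole argument on the first window $[t_0,t_m]$ combined with the dynamic programming lemma to bound the tail cost by the cheap-control value at the good intermediate point.

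Two minor differences are worth noting. First, your $\alpha(r)=\tfrac{\min(1,\gamma)}{2N}r^2$ is actually the correct choice; the paper uses $\tfrac{\gamma}{2N}r^2$, which only yields the dissipativity inequality when $\gamma\le1$. Second, your one-step dead-beat control $u_k=\tfrac1h(\bar\psi-\xi_k)-\tfrac1N\sum_l P(\xi_k,\xi_l)(\xi_l-\xi_k)$ produces a cheap-control constant blowing up like $1/h$, whereas the paper chooses the exponential feedback $u_k^i=\beta(\bar\psi-\psi_k^i)-\tfrac1N\sum_l P(\psi_k^i,\psi_l^i)(\psi_l^i-\psi_k^i)$ with $\beta\le1/h$, which directly gives the $h$-uniform constant needed for the consistency analysis in the subsequent section; you anticipate this with your ``fixed physical time'' variant, which amounts to the same thing. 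Your observation that the bound $\tilde C_1=C_0(\psi^0)$ already fulfils Definition~\ref{def:turnpike} verbatim is correct; the paper's pigeonhole refinement yields the sharper $\tilde C_1=\tilde C_0^2/(hr_1)\sim\tilde C_0^2/[(1-\lambda)(T-t_0)]$, which decays in $T$ and is the substantive ``interior decay'' content.
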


%%%%%%%%%%%%%%%%%%%%%%%%%%%%%%%%%%%%%%%%%%%%%%%%%%%%%%%%%
\section{The turnpike property with interior decay}             
\subsection{The strict dissipativity property}\label{sec3}
%continuous
Dissipativity of the cost is crucial to obtain the turnpike property. The dissipativity has been introduced in \cite{measure} for the continuous problem \eqref{1}--\eqref{2}. Applied to the time discrete  agent system 
%
%\begin{definition}%--------definition of continuous dissipation
%The dynamic optimal control problem \eqref{2} subject to \eqref{1} with system state $\psi(t)$, control $u(t)$ and initial condition $\psi(t_{0})=\psi^{0}$ on time interval $[t_{0}, T]$ is strictly dissipative at its steady state $(\psi^{(\sigma)}, u^{(\sigma)})$ if the running cost 
%\begin{align}\label{g}
%	 g(\psi(t),u(t))=\frac{1}{N}  \left( \Vert\psi(t)-\bar{\psi}\Vert^{2}_N+\gamma\Vert u(t)\Vert^{2}_N \right)
%	 \end{align}
%	 is independent of time. Furthermore, for all  pair $(\psi(t),u(t))$ solving \eqref{1} and 
%\begin{equation}
%\omega(\psi(t),u(t))=g(\psi(t),u(t))-g(\psi^{(\sigma)}, u^{(\sigma)})
%\end{equation}
%there exists a storage function $S: \mathbb{R}^{N\times d} \to \mathbb{R}$ bounded from both below and above. Furthermore, there exists  a continuous and strictly increasing function $\alpha: \left [0,\infty\right) \to \left [0,\infty\right)$ with $\alpha(0)=0$ such that for all $T>t_{0}$  and for all $\tau \in [t_{0}, T]$ we have 
%\begin{equation}
%S(\psi^{0})+\int_{t_{0}}^{\tau}\omega(\psi(t),u(t)) dt \ge S(\psi(\tau))+ \int_{t_{0}}^{\tau} \alpha\left(  \Vert \psi(t)-\psi^{(\sigma)}\Vert_N+\Vert u(t)-u^{(\sigma)}\Vert_N \right) dt.\eqref{14}
%\end{equation}
%\end{definition}
and optimal control problem $Q(t_{0},T,h,N,\psi^{0})$, it reads as follows:

\begin{definition}
The optimal control problem $Q(t_{0},T,h,N,\psi^{0})$ is called strictly dissipative  with respect to the supply rate function $\omega(\psi^{i},u^{i})$, if there exists a bounded  storage function $S: \mathbb{R}^{N\times d} \to \mathbb{R}$ and a monotone increasing continuous function $\alpha: \left [0,\infty\right) \to \left [0,\infty\right)$ with $\alpha(0)=0$ such that  for all $(\psi,u)$ fulfilling \eqref{3} holds
\begin{equation}
S(\psi^{i})+h \, \omega(\psi^{i},u^{i})  \ge S(\psi^{i+1}) +h \, \alpha  \left( \Vert \psi^i -\psi^{(\sigma)}\Vert_N + \Vert u^i-u^{(\sigma)}\Vert_N  \right).
%\label{10}
\end{equation}
\end{definition}
%Upon summation on $i$ we obtain from \eqref{10} the discretization of the inequality \eqref{14}. 
\begin{lemma}\label{lemma1}
The  discrete time optimal control problem $Q(t_{0},T,h,N,\psi^{0})$ for $\gamma$  is strictly dissipative with $\omega(\psi^i,u^i) = g(\psi^i, u^i) -g(\psi^{(\sigma)}, u^{(\sigma)})$ where $g$ is as defined in (\ref{g}),  $S=0$ and $\alpha(x) = \frac\gamma{2 \; N} x^2.$
\end{lemma}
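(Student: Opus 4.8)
The plan is to exploit that the storage function is chosen to be $S\equiv 0$, which collapses the dissipativity inequality into a single pointwise estimate requiring neither the dynamics \eqref{3} nor any telescoping over the index $i$. Indeed, with $S\equiv0$ the defining inequality reads $h\,\omega(\psi^i,u^i)\ge h\,\alpha(\cdots)$, and since $h>0$ it suffices to prove, for each admissible pair,
\[
g(\psi^i,u^i)-g(\psi^{(\sigma)},u^{(\sigma)})\;\ge\;\frac{\gamma}{2N}\left(\|\psi^i-\psi^{(\sigma)}\|_N+\|u^i-u^{(\sigma)}\|_N\right)^2 .
\]

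First I would evaluate the constant subtracted on the left. By the explicit static optimizer \eqref{stationary solution}, namely $\psi^{(\sigma)}_k=\bar\psi$ and $u^{(\sigma)}_k=0$ for all $k$, the running cost \eqref{g} vanishes, $g(\psi^{(\sigma)},u^{(\sigma)})=0$, so that $\omega(\psi^i,u^i)=g(\psi^i,u^i)$. The same substitution rewrites the arguments of $\alpha$ as $\|\psi^i-\psi^{(\sigma)}\|_N=\|\psi^i-\bar\psi\|_N$ and $\|u^i-u^{(\sigma)}\|_N=\|u^i\|_N$. Setting $a:=\|\psi^i-\bar\psi\|_N$ and $b:=\|u^i\|_N$ and unfolding the definition \eqref{g} of $g$ with the norm \eqref{norm}, the whole claim reduces to the scalar inequality
\[
a^2+\gamma\,b^2\;\ge\;\frac{\gamma}{2}\,(a+b)^2,\qquad a,b\ge 0 .
\]

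The decisive step, and the only place where the hypothesis on $\gamma$ is used, is this last elementary bound. I would first apply $\gamma\le 1$ to get $a^2+\gamma b^2\ge\gamma(a^2+b^2)$, and then the parallelogram-type inequality $(a+b)^2\le 2(a^2+b^2)$, i.e.\ $(a-b)^2\ge 0$, to reach $\gamma(a^2+b^2)\ge\frac{\gamma}{2}(a+b)^2$; chaining the two completes the argument. The genuine obstacle here is not the computation but the sharpness of the constant: minimizing $(a^2+\gamma b^2)/(a+b)^2$ over $a,b\ge 0$ yields the value $\gamma/(\gamma+1)$, attained at $b=a/\gamma$, so the coefficient $\gamma/2$ in $\alpha$ is admissible exactly when $\gamma\le 1$. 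This both justifies the particular choice $\alpha(x)=\frac{\gamma}{2N}x^2$ and shows that the restriction on $\gamma$ is necessary rather than a mere convenience.
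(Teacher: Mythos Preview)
Your argument is correct and follows essentially the same route as the paper: both observe that $S\equiv 0$ reduces the claim to a pointwise inequality, use $g(\psi^{(\sigma)},u^{(\sigma)})=0$ via \eqref{stationary solution}, and then establish $\frac{\gamma}{2}(a+b)^2\le\gamma(a^2+b^2)\le a^2+\gamma b^2$ by combining $(a+b)^2\le 2(a^2+b^2)$ with $\gamma\le 1$. Your added sharpness computation showing that the infimum of $(a^2+\gamma b^2)/(a+b)^2$ equals $\gamma/(\gamma+1)$, so that the constant $\gamma/2$ in $\alpha$ works precisely when $\gamma\le 1$, goes beyond the paper and clarifies why the restriction on $\gamma$ (left incomplete in the lemma statement) is genuinely needed.
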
 
\begin{proof}
Note that the optimal stationary state and control $(\psi^{(\sigma)}, u^{(\sigma)})$ fulfils $g(\psi^{(\sigma)}, u^{(\sigma)})=0$ due to equation \eqref{stationary solution}. By the definition of $\alpha$ and norm \eqref{norm} we have
\begin{align*}
	\alpha\left( \Vert \psi^i-\psi^{(\sigma)}\Vert_N + \Vert u^i -u^{(\sigma)}\Vert_N  \right) \leq  
	\frac{\gamma}{N} \left( \Vert \psi^i-\psi^{(\sigma)}\Vert^2_N + \Vert u^i-u^{(\sigma)}\Vert^2_N  \right) = \\
	\frac{\gamma}{N} \sum\limits_{k=1}^N 
	 \left( \Vert \psi^i_k-\psi_k^{(\sigma)}\Vert^2_N + \Vert u^i_k-u_k^{(\sigma)}\Vert^2 \right)  
	 	\leq 
	\frac{\gamma}{N} \sum\limits_{k=1}^N 
	\left( \Vert \psi^i_k- \bar{\psi}  \Vert^2_N + \Vert u^i_k \Vert^2 \right) = 
	g(\psi^i, u^i) - g(\psi^{(\sigma)}, u^{(\sigma)}).
\end{align*}
\end{proof}

%%%%%%%%%%%%%%%%%%%%%%%%%%%%%%%%%%%%%%%%%%%%%%%%%%%%%%%%%%%%%%
\subsection{The cheap control condition}\label{sec4}

The turnpike property \ref{def:turnpike} asserts that the solution to the dynamic optimal control problem  is closed to the stationary optimal control problem for a large time interval.  The cheap control assumption defined below \eqref{definition:cheap}, introduces a bound depending on the  distance between the initial state and the steady state for the discrete in time problem. 
\begin{definition}\label{definition:cheap}
We say that  the optimal control problem \eqref{5} satisfies 
the cheap control condition 
%Assume that
if there exist constants $C_{0}>0$ and $\varepsilon_{0} \geq 0$ such that for all initial times $t_{0}$ and all initial states $\psi^{0}$ and for all terminal times $T>t_{0}$ the inequality
\begin{equation}
%\label{cheapcontrolcondition}
v_{N}^{h,t_{0},T}(\psi^{0})  \le C_{0}  \, \alpha(\Vert \psi^{0}-\psi^{(\sigma)}\Vert_N) + \varepsilon_{0} +S(\psi^M)-S(\psi^{0}).
\label{12}
\end{equation}
where $v_{N}^{h,t_{0},T}(\psi^{0})$ is the optimal value of the control problem \eqref{v8}  and $\psi^M$ is the optimal state at time $T$  given by equation \eqref{3} for $M$ given by equation \eqref{M}.
\end{definition}
Also, here the constant $C_0$ can depend on $h$ and its dependence is discussed in Section \ref{sec7}.

\begin{lemma}\label{lemma2}
Assume that $P$ fulfils \eqref{assumption}.  The optimal control problem $Q(t_{0},T,h,N,\psi^{0})$ is cheaply controllable in the sense of Definition \ref{definition:cheap} for $\varepsilon_{0}=0$,   $\alpha(x)=\frac{\gamma}{2N}x^2$ and $S=0$ as in Lemma \eqref{lemma1}.
\end{lemma}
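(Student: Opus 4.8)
The plan is to use the variational characterization of the optimal value: since $v_N^{h,t_0,T}(\psi^0)$ is an infimum over all admissible controls, it suffices to exhibit a \emph{single} admissible control–state pair whose cost is bounded by $C_0\,\alpha(\|\psi^0-\psi^{(\sigma)}\|_N)$, with $C_0$ possibly depending on $h$. Because the running cost \eqref{g} penalizes the state only through its distance to $\bar\psi=\psi^{(\sigma)}$ and penalizes the control quadratically, the natural candidate is to steer every agent onto the turnpike $\bar\psi$ as quickly as the dynamics \eqref{3} allow and then leave it at rest there at zero cost.

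Concretely, I would first construct the candidate control. Since the Euler step \eqref{3}–\eqref{4} is affine in $u_k^i$, solving $\psi_k^1=\bar\psi$ for the first control gives
\[
u_k^0=\frac1h\bigl(\bar\psi-\psi_k^0\bigr)-\frac1N\sum_{l=1}^N P(\psi_k^0,\psi_l^0)\bigl(\psi_l^0-\psi_k^0\bigr),
\]
and I set $u_k^i=0$ for all $i\ge1$. The key structural point, already exploited in \eqref{stationary solution}, is that once all agents coincide at $\bar\psi$ the interaction term in \eqref{4} vanishes identically because every factor $\psi_l-\psi_k$ is zero; hence with zero control the state stays frozen, $\psi_k^i=\bar\psi$ for all $i\ge1$, independently of $T$. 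Therefore only the index $i=0$ contributes to the sum in \eqref{5}, and the cost of the candidate collapses to $\frac{h}{N}\bigl(\|\psi^0-\bar\psi\|_N^2+\gamma\|u^0\|_N^2\bigr)$, uniformly in $T$ and $N$.

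The main quantitative step — and the only real obstacle — is to bound $\|u^0\|_N^2$ by a multiple of $\|\psi^0-\psi^{(\sigma)}\|_N^2$. The contribution $\frac1h(\bar\psi-\psi_k^0)$ is already of the required form, but the interaction term is delicate: the kernel multiplies pairwise differences $\psi_l^0-\psi_k^0$ rather than distances to $\bar\psi$, so the estimate cannot be read off directly. Here I would use the boundedness of $P$ from \eqref{assumption} together with the triangle inequality $\|\psi_l^0-\psi_k^0\|\le\|\psi_l^0-\bar\psi\|+\|\psi_k^0-\bar\psi\|$ to re-express each difference through distances to the turnpike, and then a Cauchy–Schwarz (Jensen) estimate $\bigl(\frac1N\sum_l\|\psi_l^0-\bar\psi\|\bigr)^2\le\frac1N\|\psi^0-\psi^{(\sigma)}\|_N^2$ to control the averaged part after summing over $k$. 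This yields $\|u^0\|_N^2\le K\,\|\psi^0-\psi^{(\sigma)}\|_N^2$ for an explicit constant $K=K(h,\|P\|_\infty)$.

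Finally I would assemble the two pieces: the candidate cost is bounded by $\frac{h}{N}(1+\gamma K)\|\psi^0-\psi^{(\sigma)}\|_N^2$, which I match against the prescribed $\alpha(x)=\frac{\gamma}{2N}x^2$ by taking $C_0=\frac{2h(1+\gamma K)}{\gamma}$, with $\varepsilon_0=0$ and $S\equiv0$ as in Lemma \ref{lemma1}; since $v_N^{h,t_0,T}(\psi^0)$ is no larger than this candidate cost, \eqref{12} follows. I note that the one-step construction forces $K\sim h^{-2}$ and hence $C_0\sim h^{-1}$ as $h\to0$, which is exactly the $h$-dependence flagged after Definition \ref{definition:cheap} and examined in Section \ref{sec7}.
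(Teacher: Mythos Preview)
Your argument is correct and does establish the lemma as stated: the one-step ``dead-beat'' control drives every agent to $\bar\psi$ after a single Euler step, the interaction term then vanishes because all pairwise differences are zero, and your bound on $\Vert u^0\Vert_N^2$ via the boundedness of $P$, the triangle inequality and Cauchy--Schwarz is sound.

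The paper, however, takes a genuinely different route. Instead of reaching $\bar\psi$ in one step it uses the feedback law \eqref{14} with a free parameter $0<\beta\le 1/h$,
\[
u_k^i=\beta(\bar\psi-\psi_k^i)-\frac1N\sum_{l=1}^N P(\psi_k^i,\psi_l^i)(\psi_l^i-\psi_k^i),
\]
so that $\psi_k^i-\bar\psi=(1-h\beta)^i(\psi_k^0-\bar\psi)$ decays geometrically and the cost is controlled by a geometric series. Your construction is precisely the endpoint $\beta=1/h$ of this family. The payoff of the paper's extra parameter is that for \emph{fixed} $\beta$ independent of $h$ one obtains
\[
\tilde C_0(h)=\frac{h}{1-(1-h\beta)^2}\Bigl(\tfrac{2}{\gamma}+4(\beta^2+2\beta\Vert P\Vert+2\Vert P\Vert^2)\Bigr)
\longrightarrow \frac{1}{2\beta}\Bigl(\tfrac{2}{\gamma}+4(\beta^2+2\beta\Vert P\Vert+2\Vert P\Vert^2)\Bigr)<\infty
\]
as $h\to0$, whereas your constant satisfies $C_0\sim h^{-1}$ and blows up. Your final sentence therefore misreads Section~\ref{sec7}: that section does not accommodate an $h^{-1}$ dependence but rather proves the \emph{opposite}, namely that the paper's $\tilde C_0(h)$ is uniformly bounded on $(0,1/\beta)$, which is the crux of the uniform-in-$h$ turnpike result. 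So while your proof is valid for Lemma~\ref{lemma2} in isolation, it would not support the conclusions of Section~\ref{sec7} without redoing the construction.
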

\begin{proof}
Due to the choice of $\alpha,S$ and $\varepsilon_{0}$ we only need to show, that there  exists a constant $\tilde{C_{0}}$ such that for all initial times $t_{0}$ and for all terminal times $T>t_{0}$ and all initial states $\psi^{0} \in \mathbb{R}^{N \times d}$ we have the inequality
\begin{equation}%-------------(13)
	v_{N}^{h,t_{0},T}(\psi^{0}) \le \tilde{C_{0}}  \, \frac{\gamma}{2}\frac{1}{N} \left(\Vert \psi^{0}-\psi^{(\sigma)}\Vert_{N} \right)^{2}.
	\label{13}
\end{equation}
To prove inequality \eqref{13}   we  consider the same stabilizing feedback law for controls in \cite{main} that leads to exponential decay of $g(\psi^i, u^i)$ given by equation \eqref{g}. Let $h=\frac{T-t_{0}}{M} $ and  let $\frac{1}h \geq \beta>0$ be given and define for all
$k=1,..,N,$ and $i=0,...,M$ 
\begin{equation}%(14)
	u_{k}^{i}=\beta(\bar\psi-\psi_{k}^{i})-\frac{1}{N}{\sum_{l=1}^{N}P(\psi_{k}^{i},\psi_{l}^{i}) (\psi_{l}^{i}-\psi_{k}^{i})}.
	\label{14}
\end{equation}
The corresponding state $\psi^i$ for any initial state $\psi^0$ at time $t_0$ is then given by the dynamics \eqref{3}, i.e.,   
\begin{equation}%(15)
	\psi_{k}^{i+1}=\psi_{k}^{i}+ h \, \beta \; \left(\bar\psi-\psi_{k}^{i} \right). 
	\label{15}
\end{equation}
This implies 
\begin{equation*}
	\psi_{k}^{i}=(1-h\beta)^{i}\psi_{k}^{0}+\sum_{l=0}^{i-1}(1-h\beta)^{l}h\beta \; \bar\psi.
\end{equation*}
Since by  definition of $\beta$ we have  $h \; \beta<1$, we obtain 
\begin{align*}
\Vert\psi_{k}^{i}-\bar\psi\Vert =\Vert(1-h\beta)^{i}\psi_{k}^{0}+[\sum_{l=0}^{i-1}(1-h\beta)^{l}h\beta-1]\bar\psi \Vert =(1-h\beta)^{i}\Vert\psi_{k}^{0}-\bar\psi\Vert.
\end{align*}
The control $u^i_k$ is estimated by 
\begin{align*}
	\Vert u_{k}^{i}\Vert &  \le \beta \Vert \bar\psi-\psi_{k}^{i}\Vert+\frac{\Vert P\Vert}{N} \sum_{\ell=1}^{N}{\Vert\psi_{k}^{i}-\bar\psi\Vert+\Vert\psi_{\ell}^{i}-\bar\psi\Vert} 
	= (\beta+\Vert P\Vert)\Vert\bar\psi-\psi_{k}^{i}\Vert+\frac{\Vert P\Vert}{N} \sum_{\ell=1}^{N}{\Vert\psi_{\ell}^{i}-\bar\psi\Vert} \\
	& 
 \le (1-h\beta)^{i}  \left( \left(\beta+\Vert P\Vert\right)\Vert\psi_{k}^{0}-\bar\psi\Vert+\frac{\Vert P\Vert}{N} \sum_{\ell=1}^{N}{\Vert\psi_{\ell}^{0}-\bar\psi\Vert} \right). 
 \end{align*}
 Therefore, 
 \begin{align*}
\Vert u^i_k \Vert^2 & \le 2 (1-h\beta)^{2i}   \left( \left(\beta+\Vert P\Vert\right)^2 \Vert\psi_{k}^{0}-\bar\psi\Vert^2 +\frac{\Vert P\Vert^2}{N} \sum_{\ell=1}^{N}{\Vert\psi_{\ell}^{0}-\bar\psi\Vert^2 } \right), 
\end{align*}
and 
 \begin{align*}
	\Vert u^i \Vert_N^2 & \le 2 (1-h\beta)^{2i}   \left( \left(\beta+\Vert P\Vert\right)^2 + \Vert P \Vert^2 \right)  \Vert\psi_{\ell}^{0}-\bar\psi\Vert^2_N. 
\end{align*}
Also, the state at time $i$ are bounded by the  initial state 
\begin{equation*}
 \frac1N \Vert \psi^i - \bar\psi \Vert_N^2 = \frac{1}{N}\sum_{k=1}^{N}\Vert\psi_{k}^{i}-\bar\psi\Vert^{2} = 
 \frac{ (1-h\beta)^{2i} }N \Vert \psi^0 - \bar{\psi} \Vert^2_N.
\end{equation*}
Combining the previous estimates yields for the cost $J_{N}^{h,t_{0},T}(\psi, u)$ the inequality
\begin{align*}
	J_{N}^{h,t_{0},T}(\psi, u) &\le \frac{h}N  \sum_{i=0}^{M-1}(1-h\beta)^{2i} \left( 1+2\gamma((\beta+\Vert P \Vert)^{2}+\Vert P\Vert^{2}) \right) \Vert \psi^{0}-\bar\psi\Vert_{N}^{2}\\
	&=\frac{h-h(1-h\beta)^{2M}}{1-(1-h\beta)^{2}} \left( 1+2\gamma(\beta^{2}+2\beta\Vert P \Vert+2\Vert P\Vert^{2})\right) \frac{1}{N}\Vert \psi^{0}-\bar\psi\Vert_{N}^{2}.
\end{align*}
Therefore, the optimal value \eqref{v8} of problem \eqref{5} fulfils
\begin{align*}
	v_{N}^{h,t_{0},T}(\psi^{0})\le\frac{h}{1-(1-h\beta)^{2}} \left( 1+2\gamma(\beta^{2}+2\beta\Vert P \Vert+2\Vert P\Vert^{2})\right) \frac{1}{N}\Vert \psi^{0}-\bar\psi\Vert_{N}^{2}.
\end{align*}
Hence, equations \eqref{12} and  \eqref{13} hold true with 
\begin{equation}
	\label{c0gleichung}
	%\begin{aligned}
	\tilde C_{0} :=\frac{h}{1-(1-h\beta)^{2}} \left( 
	\frac{2}{\gamma}+4(\beta^{2}+2\beta\Vert P \Vert+2\Vert P\Vert^{2}) \right).
\end{equation}
\end{proof}

%%%%%%%%%%%%%%%%%%%%%%%%%%%

\subsection{Proof of Theorem \eqref{thm1}}\label{sec6}

Next, we turn to the proof of Theorem \ref{thm1} based on the previous lemmas on strict dissipativity and the cheap control property of the time discrete system. 

\begin{proof}
Under the assumptions the problem \eqref{5} fulfils the strict dissipativity inequality \eqref{10} and it is cheaply controllable \eqref{12} for $S=\varepsilon_{0}=0$ and $\alpha(x) = \frac{\gamma}{2 N}x^2.$
Denote by  $({\psi}^*, \hat{u}^*)$ the optimal state and control to problem \eqref{5}. Then, the strict dissipation inequality   and  the cheap control yields 
\begin{equation}
\begin{aligned}
h\sum_{i=0}^{M-1} \alpha\left(\Vert {\psi}^{*,i}-\psi^{(\sigma)}\Vert_{N}+\Vert {u}^{*,i}-u^{(\sigma)}\Vert_{N}\right) \le h\sum_{i=0}^{M-1}g({\psi^{*,i}}, {u^{*,i}})  \\ =v_{N}^{h,t_{0},T}(\psi^{0})
 \le \tilde{C_{0}}  \, \alpha(\Vert \psi^{0}-\psi^{(\sigma)}\Vert_{N}).
\label{20}
\end{aligned}
\end{equation}
Let \begin{equation}
    \label{r1}
    r_{1}=\left \lfloor (1-\lambda)M\right \rfloor. \end{equation}

Suppose  that for all $i \in \left\{0,1,\dots, r_{1}-1\right\}$ the inequality $$h\alpha(\Vert {\psi}^{*,i}-\psi^{(\sigma)}\Vert_{N}+\Vert {u}^{*,i}-u^{(\sigma)}\Vert_{N})>\frac{1}{r_{1}} \tilde{C_{0}}  \, \alpha(\Vert \psi^{0}-\psi^{(\sigma)}\Vert_{N})$$holds. Then, this implies $$h\sum_{i=0}^{r_{1}-1} \alpha(\Vert {\psi}^{*,i}-\psi^{(\sigma)}\Vert_{N}+\Vert {u}^{*,i}-u^{(\sigma)}\Vert_{N}) > \tilde{C_{0}} \, \alpha(\Vert \psi^{0}-\psi^{(\sigma)}\Vert_{N})$$ and therefore  a contradiction to inequality  \eqref{20}. Hence, 
there exists an index $r_{1}^{*} \in  \left\{0,1,\dots, r_{1}-1\right\} $ such that
\begin{equation}
h\; \alpha(\Vert {\psi}^{*, r_{1}^{*}}-\psi^{(\sigma)}\Vert_{N}+\Vert {u}^{*,r_{1}^{*}}-u^{(\sigma)}\Vert_{N}) \le \frac{1}{r_{1}} \tilde{C_{0}} \, \alpha(\Vert \psi^{0}-\psi^{(\sigma)}\Vert_{N}).
\label{21}
\end{equation}
This implies 
\begin{equation}
\begin{aligned}
\sum_{i=r_{1}}^{M-1}h\alpha(\Vert{\psi}^{*,i}-\psi^{(\sigma)}\Vert_{N}+\Vert {u}^{*,i}-u^{(\sigma)}\Vert_{N}) &\le
\sum_{i=r_{1}^{*}}^{M-1}h\alpha(\Vert{\psi}^{*,i}-\psi^{(\sigma)}\Vert_{N}+\Vert {u}^{*,i}-u^{(\sigma)}\Vert_{N}) \\& \le 
\sum_{i=r_{1}^{*}}^{M-1}hg({\psi^{*,i}}, {u^{*,i}})  =v_{N}^{h,t_{r_{1}^{*}},T}({\psi}^{*, r_{1}^{*}}).
\end{aligned}
\label{22}
\end{equation}
The  value  $v_{N}^{h,t_{r_{1}^{*}},T}({\psi}^{*, r_{1}^{*}})$ is the optimal value of the optimization problem $Q(t_{r_{1}^{*}},T,h,N,{\psi}^{*, r_{1}^{*}})$ with initial time $t_{r^{*}}$ and initial state ${\psi}^{*, r_{1}^{*}}$. The cheap control results of Lemma \ref{lemma2} yields 
\begin{equation}
v_{N}^{h,t_{r_{1}^{*}},T}({\psi}^{*,r_{1}^{*}}) \le \tilde{C_{0}} \, \alpha\left( \Vert {\psi}^{*, r_{1}^{*}}-\psi^{(\sigma)}\Vert_{N} \right)).
\label{23}
\end{equation}
Together with equation \eqref{21} this yields 
\begin{equation}
v_{N}^{h,t_{r_{1}^{*}},T}({\psi}^{*, r_{1}^{*}}) \le \frac{\tilde{C_{0}}^{2}}{hr_{1}} \, \alpha(\Vert \psi^{0}-\psi^{(\sigma)}\Vert_{N}), 
\end{equation}
and combining  with equation \eqref{22} we establish the estimate 
$$ \sum_{i=r_{1}}^{M-1}h\alpha(\Vert{\psi}^{*,i}-\psi^{(\sigma)}\Vert_{N}+\Vert {u}^{*,i}-u^{(\sigma)}\Vert_{N}) \le  \frac{\tilde{C_{0}}^{2}}{h \; r_{1}} \alpha(\Vert \psi^{0}-\psi^{(\sigma)}\Vert_{N}).$$
We define the $h-$dependent constant $\tilde{C}_1=\tilde{C}_1(h)$ by 
\begin{equation}\label{c1h}
    \tilde{C}_1:=  \frac{\tilde{C_{0}}^{2}}{h \; r_{1}}.
\end{equation}
Since $r_1$ is given by equation \eqref{r1}, the product $h r_1$ is bounded from below by $(1-\lambda)T.$
\end{proof}

In the comparisons of the numerical simulations later on, the following result can be established. 

\begin{lemma}
Assume $\beta>0$ such that $h \beta <1.$ 
For the cheap control \eqref{14}, the discrete Lyapunov function for $i=0,\dots,$
\begin{align}
    L^i_N := \frac1N \Vert \psi^i - \bar\psi \Vert_N^2
\end{align}
along the solution $\psi^i$ given by \eqref{3} 
decays at each discrete time step  with the rate
\begin{align}
    r = \left( 1 - h \beta \right)^{2}
\end{align}
\end{lemma}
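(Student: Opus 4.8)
The plan is to exploit the fact that, under the feedback law \eqref{14}, the closed-loop dynamics has already been reduced to the affine recursion \eqref{15} in the proof of Lemma \ref{lemma2}; the claimed Lyapunov decay then follows by an elementary contraction estimate.

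First I would recall that substituting the control \eqref{14} into the state equation \eqref{3} with \eqref{4} cancels the interaction term, so that for every agent $k=1,\dots,N$ the update collapses to \eqref{15}, namely $\psi_{k}^{i+1}=\psi_{k}^{i}+h\beta(\bar\psi-\psi_{k}^{i})$. Introducing the per-agent error $e_{k}^{i}:=\psi_{k}^{i}-\bar\psi$ and subtracting $\bar\psi$ from both sides of \eqref{15} yields
\begin{equation*}
e_{k}^{i+1}=(1-h\beta)\,e_{k}^{i}, \qquad k=1,\dots,N.
\end{equation*}
This identifies the closed-loop map as $x\mapsto \bar\psi+(1-h\beta)(x-\bar\psi)$, the affine contraction toward the fixed point $\bar\psi$ with scalar factor $1-h\beta$.

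Next I would take the Euclidean norm on $\mathbb{R}^{d}$ and square, using that $1-h\beta$ is a real scalar, to obtain $\|e_{k}^{i+1}\|^{2}=(1-h\beta)^{2}\|e_{k}^{i}\|^{2}$. Summing over $k$ and invoking the definition \eqref{norm} of $\|\cdot\|_{N}$ gives $\|\psi^{i+1}-\bar\psi\|_{N}^{2}=(1-h\beta)^{2}\|\psi^{i}-\bar\psi\|_{N}^{2}$; dividing by $N$ and using the definition of $L_{N}^{i}$ produces $L_{N}^{i+1}=(1-h\beta)^{2}L_{N}^{i}=r\,L_{N}^{i}$, which is the asserted one-step decay valid for all $i\ge 0$.

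I do not anticipate any genuine obstacle here, and this should be stated frankly: the only substantive point is the cancellation of the interaction kernel under the feedback \eqref{14}, which has already been performed to derive \eqref{15}; the rest is the routine observation that an affine map contracts squared distances to its fixed point by exactly $(1-h\beta)^{2}$. The hypothesis $h\beta<1$ serves only to guarantee $r=(1-h\beta)^{2}\in[0,1)$, so that the decay is strict.
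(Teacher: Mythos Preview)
Your proposal is correct and follows essentially the same approach as the paper: the paper's argument simply invokes the relation derived in the proof of Lemma~\ref{lemma2} (namely $\|\psi_k^i-\bar\psi\|=(1-h\beta)^i\|\psi_k^0-\bar\psi\|$, which is your error recursion iterated) to conclude $L_N^i=(1-h\beta)^{2i}L_N^0$. Your write-up is in fact more explicit than the paper's one-line justification, but the content is identical.
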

Due to the relation \eqref{14}, we have that 
\begin{align}
    L^i_N = \frac{1}N ( 1 - h \beta)^{2i} \Vert \psi^0 - \bar\psi \Vert^2_N = (1-h \beta)^{2 i } L^0_N.
\end{align}

%------------------------------------------------------------------------------------------------------------

\section{Uniformity of the turnpike property
for $h\rightarrow 0^+$}\label{sec7}

In this section, we show that the turnpike property 
for each discretization level $h>0$, 
also holds uniformly for  $h\rightarrow 0^+$.
This completes the picture about the turnpike property 
that we have established already for the continuous  in time problem in \cite{main}. 
The turnpike property possesses a continuous transition between the time-discrete level and the continuous description.
\par 
Due to (\ref{c0gleichung} and for a given stepsize $h$,  the constant in the 
inequality 
%\ref{cheapcontrolcondition}
(\ref{12})
in the cheap control condition
is given by 
\begin{align*}
\tilde C_{0}(h)    
&=
\frac{1}{2 \, \beta - h \,\beta^2}
\left[\frac{2}{\gamma}+4(\beta^{2}+2\beta\Vert P \Vert+2\Vert P\Vert^{2})\right].
\end{align*}
This implies
\[
\lim_{h \rightarrow 0^+}
\tilde C_0(h)
=
\frac{1}{\beta}
\left[\frac{1}{\gamma}+ 2(\beta^{2}+ 2\beta\Vert P \Vert+ 2 \Vert P\Vert^{2})\right] =: \tilde D_0
\]
and
$\tilde C_0(\cdot)$ is strictly decreasing with respect to $h$.
Hence, for all $h \in (0, \, \frac{1}{\beta})$
%the turnpike inequality 
%(\ref{19})
%holds for all  $h>0$ with
we have the inequality 
\[\tilde D_0 < \tilde C_0(h) \leq  2 \tilde D_0.\]
Thus  inequality 
%\ref{cheapcontrolcondition}
(\ref{12})
in the cheap control condition
with the constant $  2 \tilde D_0 $
holds  uniformly with respect to $h$.

Furthermore, we have a similar estimate for $\tilde{C}_1$ given by equation \eqref{c1h}. 
Note that denominator $h r_1$ is bounded from below by $(1-\lambda)T$ and the numerator 
$\tilde{C}_0^2$ is bounded from above for  $h \in (0, \, \frac{1}{\beta})$. Hence, $\tilde{C}_1$ 
is bounded independently of $h$ and therefore the turnpike inequality \eqref{19} holds uniformly for
all $h\in (0, \, \frac{1}{\beta}).$

%%%%%%%%%%%%%%%%%%%%%%%%%%%%%%%%%%%%%%%%%%%%%%%%%%%%%%%%%%%%%%%%%%%%%%%%%%%%%%%%%%%%%%%%%%%%%%%%%%%%%%%%%%%%%%%%%%%%%%%%%%%%%%%%%%%%%%%%%%%%%%%%%%%%%%%%%%%%%%%%%%%%%%

\section{Numerical experiments}\label{sec8}

We provide several examples in order to illustrate the turnpike phenomenon for optimal control problems.  According to the analysis in this article, the turnpike property with interior decay gives an intuition that for a sufficiently large time horizon $T-t_{0}$, approximations of the optimal control-state pairs should be close to the optimal steady state after the middle of the interval. For a given initial state and discrete scheme of the systems governed by ordinary differential equations, we can get the state $\psi_{k}$ and control $u_{k}$ of each particle by iteration at the time points $t_{i}$, we implement this process in Matlab. To solve the optimal problem with unknowns $\psi_{k}^{i}, \, u_{k}^{i}, \, i=1,\dots,M, \, k=1,\dots,N$, we use the optimization routine IPOPT (see \cite{IPOPT}) combined with AMPL, and plot the results in Matlab. 
We randomly generate the set of initial state $\psi_{k}^{0}$ uniformly in [0,1] at initial time $t_{0}=0$ for $N=100$ agents moving in $d=1$ dimensional space up to a final time $T=5$. Setting the desired consensus point $\bar{\psi}=0.5$ and the penalization for the control energy $\gamma=0.1$, we implement also the cheap control \eqref{14} presented in \ref{sec4}. 
Test 3 also  contains a comparison for increasing numbers of agents $N\in \{50, \, 100, \, 500\}$.

\paragraph{Test 1: Trajectories and controls, the turnpike phenomenon.}

First we consider $P(x,y)=\Vert x-y \Vert ^{2}$ as the interaction kernel, where $x,y\in \mathbb{R}^{d}$. The step-size is set to $h=\frac{T-t_{0}}{M}=0.01$, hence $M=500$ time-steps.
The uncontrolled case, that is for $u = 0$, is shown in Figure \ref{fig:T1_a}, we can see that the mean is conserved and the dynamics converge towards its stable equilibrium position which may different from the prescribed consensus state.

The cheap controlled with different parameter $\beta$ and optimal controlled results are presented in Figure \ref{fig:T1_b}. It is observed that in all the cases, the agents tends to the prescribed consensus point.
Although by setting larger value for $\beta$ the states and controls can arrive to the steady point earlier, it may take more total running cost, and the control pays more effort at the beginning period of the time horizon.

In Figure \ref{fig:T1_c}, the decays of $L_{N}$ (top-left) and of the running cost $g$ (top-right) are presented, for the five different cases discussed above.
For a larger value of $\beta$, we can numerically confirm that  $L_{N}$ decays faster according to the theoretical analysis of it. However, as shown on the top-right plot, it takes more total running cost. We can also observe that for $\beta=3$ the cheap control is very close to the optimal one, and it can be considered a suitable approximation of it. 
 In the lower plots of Figure \ref{fig:T1_c} we actually show the turnpike phenomenon, on the left we depict the gap between the optimal states of all particles $\sum_{k=1}^{N}\psi^{i}_{k}$  and their steady states $\sum_{k=1}^{N}\bar{\psi}$, on the right the difference between the optimal control and static control $u=0$. We observe that after an initial stabilization phase, the dynamic optimal state and control are close to the static ones.

%%%%%%%%%%%%-----------------------------------------figure 1 A
\begin{figure}[t]
\hspace{4.5cm}\includegraphics[width=0.8\linewidth]{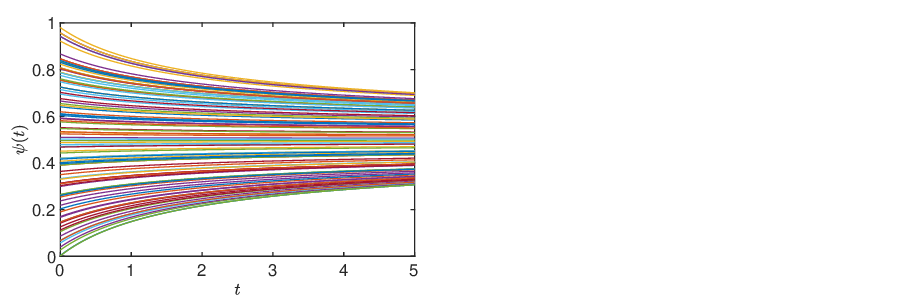}
\caption{\textbf{Test 1.} Trajectories evolution in time for $N=100$ agents without control. The uncontrolled dynamics converge to its equilibrium point.}
\label{fig:T1_a}
\end{figure}
%%%%%%%%%%%%-----------------------------------------figure 1 B
\begin{figure}[t]
\begin{center}
\includegraphics[width=0.24\textwidth]{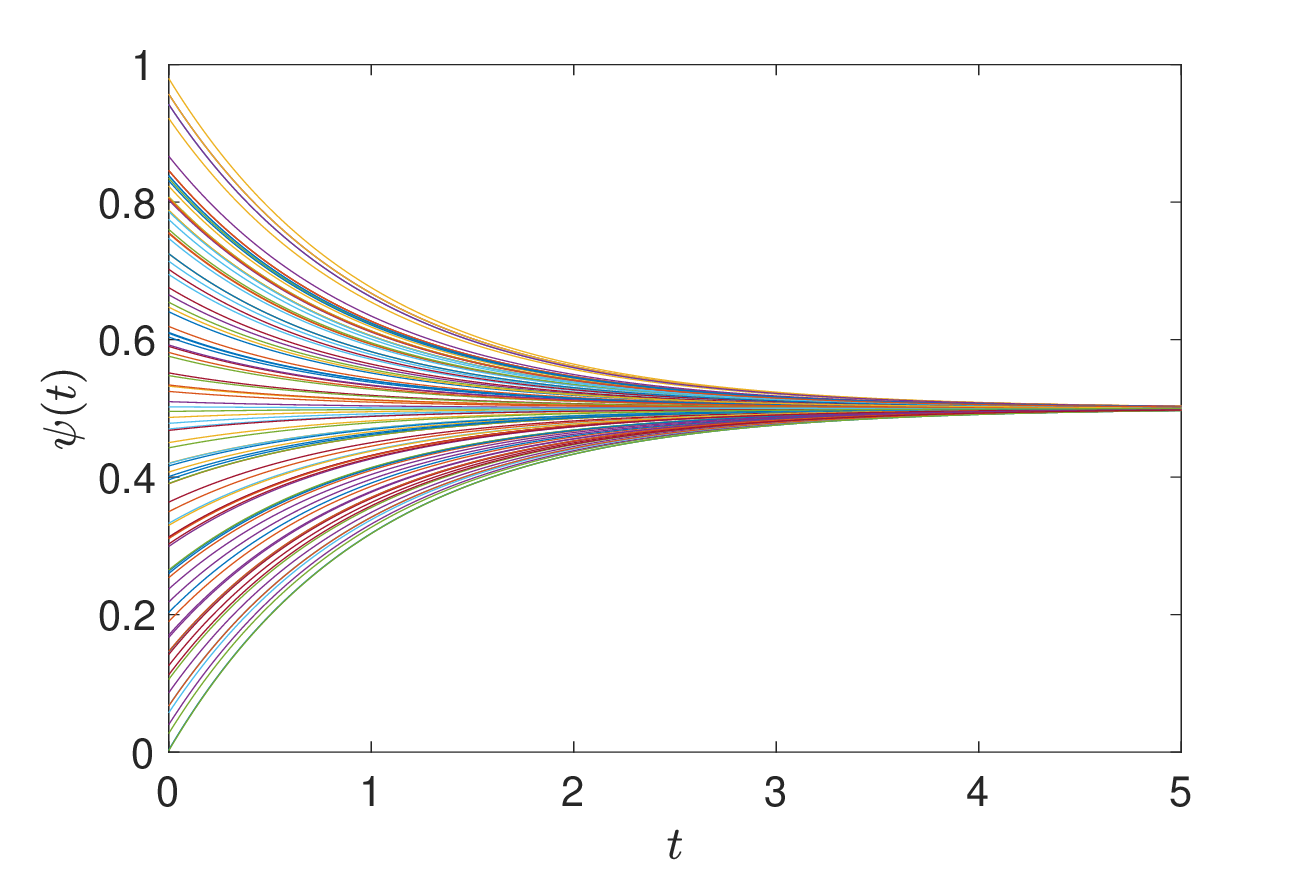}
\includegraphics[width=0.24\textwidth]{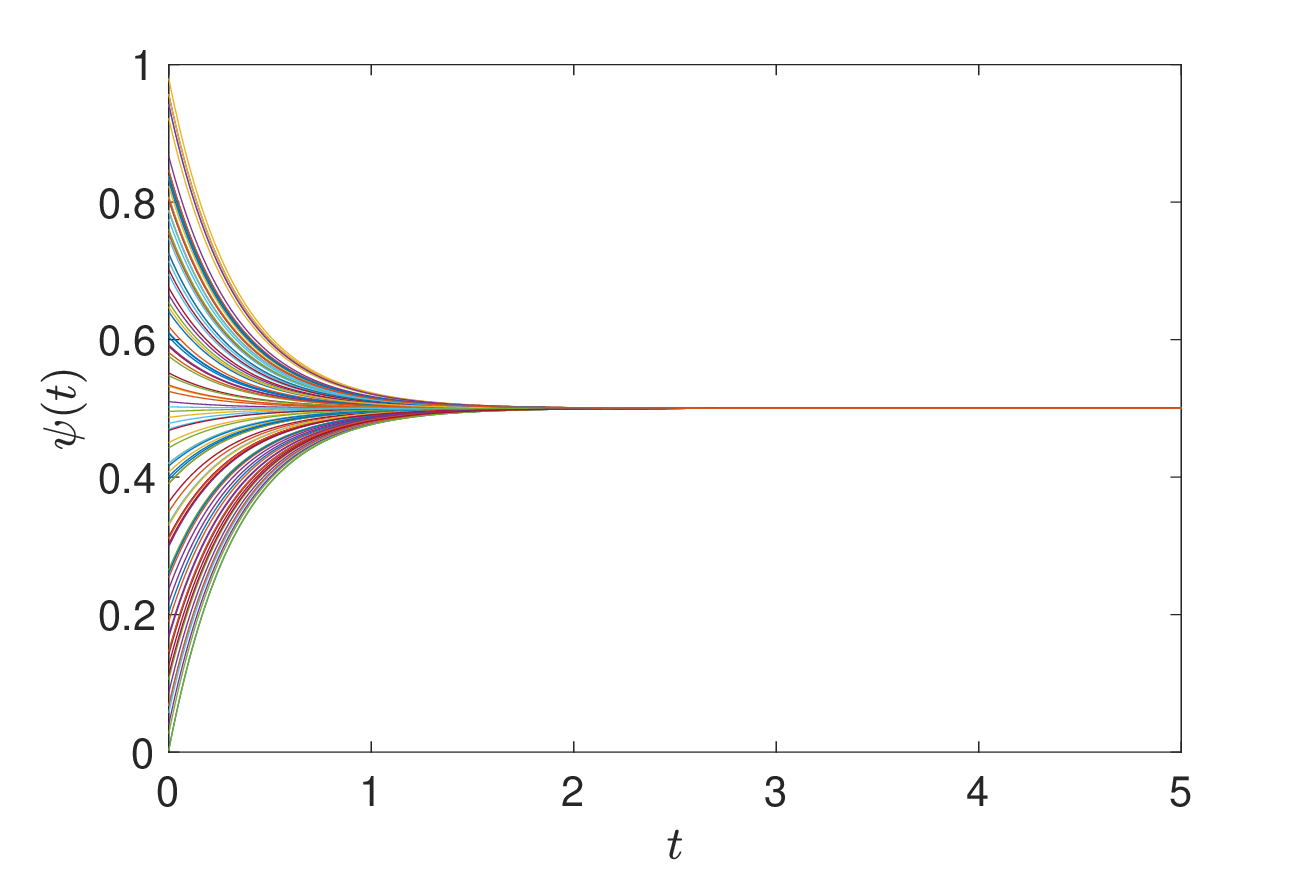}
\includegraphics[width=0.24\textwidth]{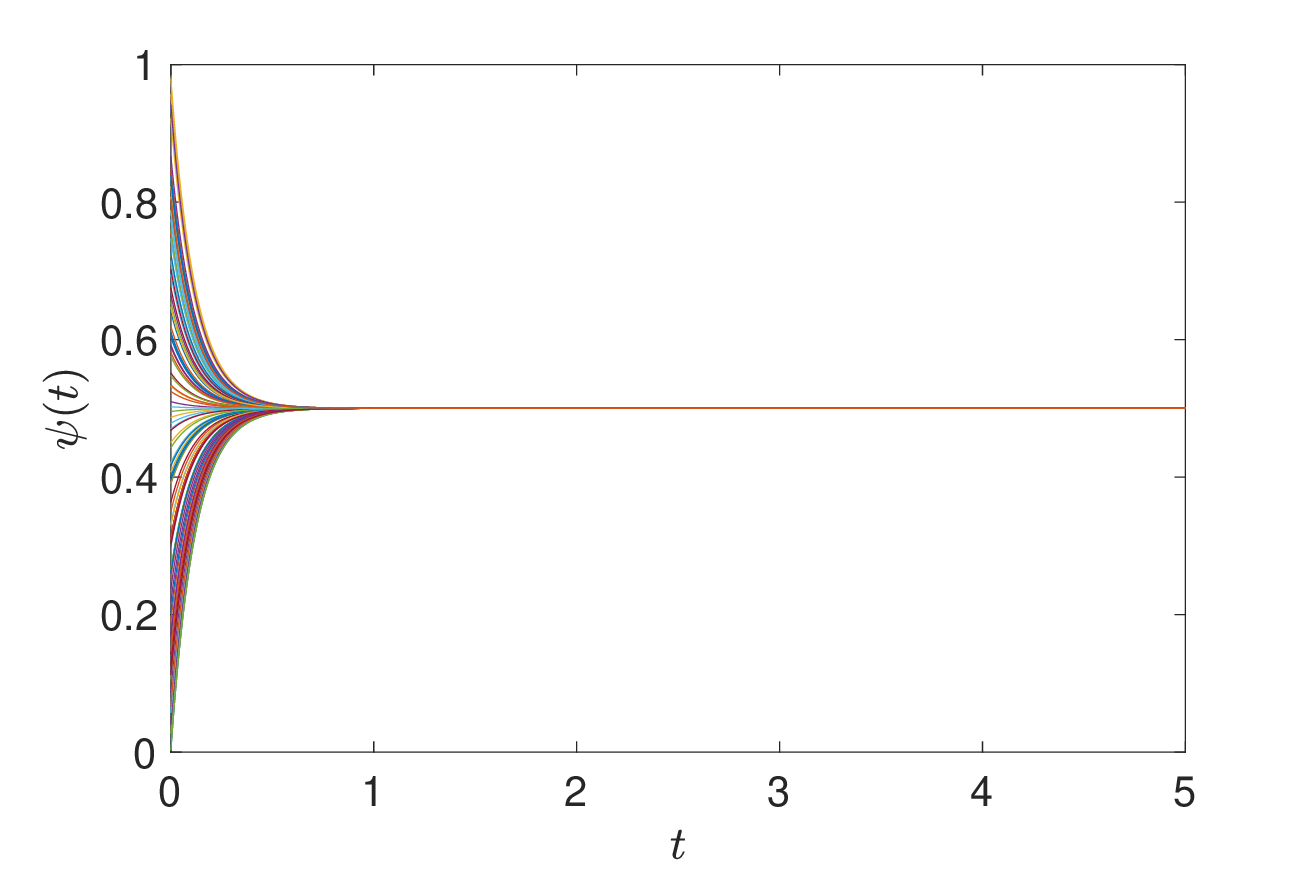}
\includegraphics[width=0.24\textwidth]{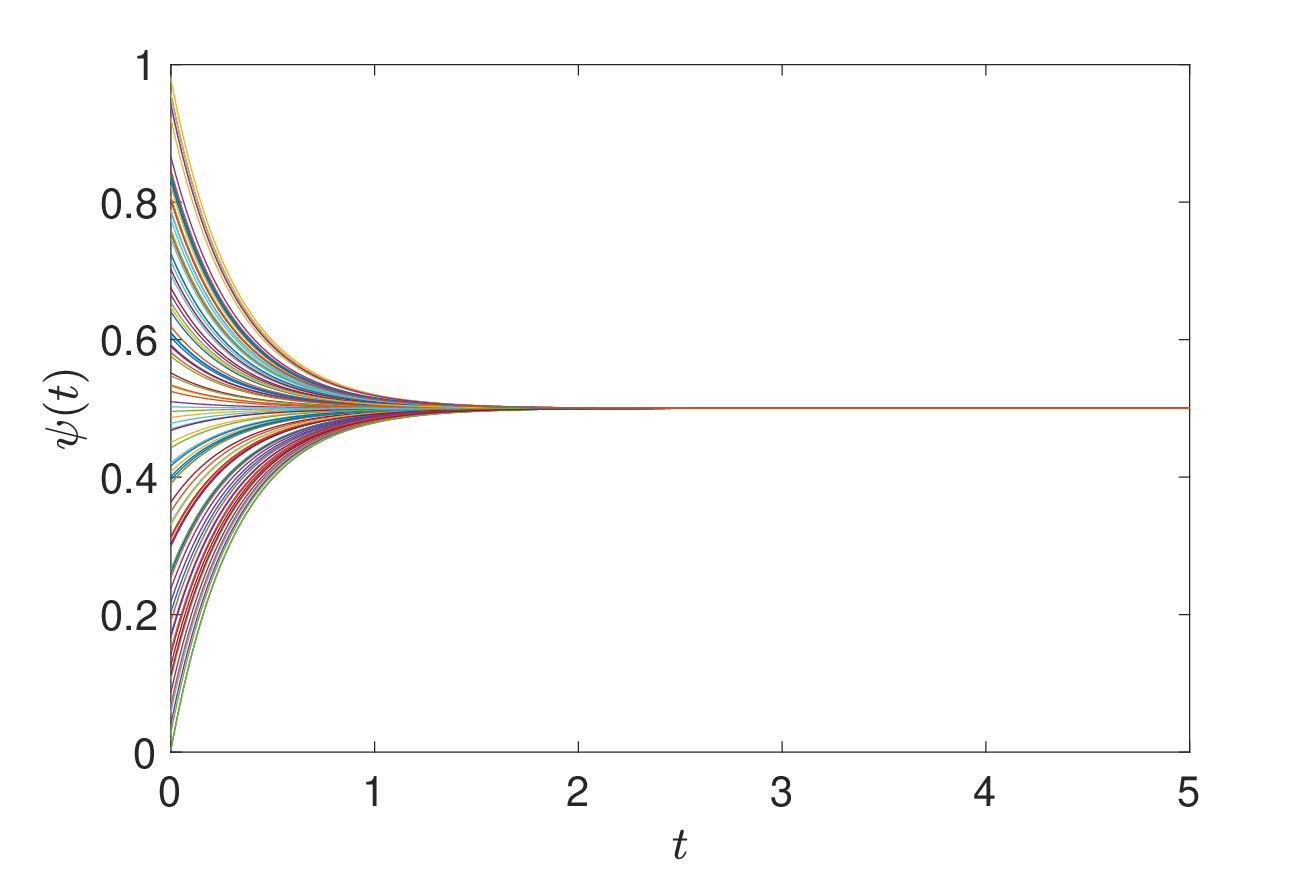}
\\
\includegraphics[width=0.24\textwidth]{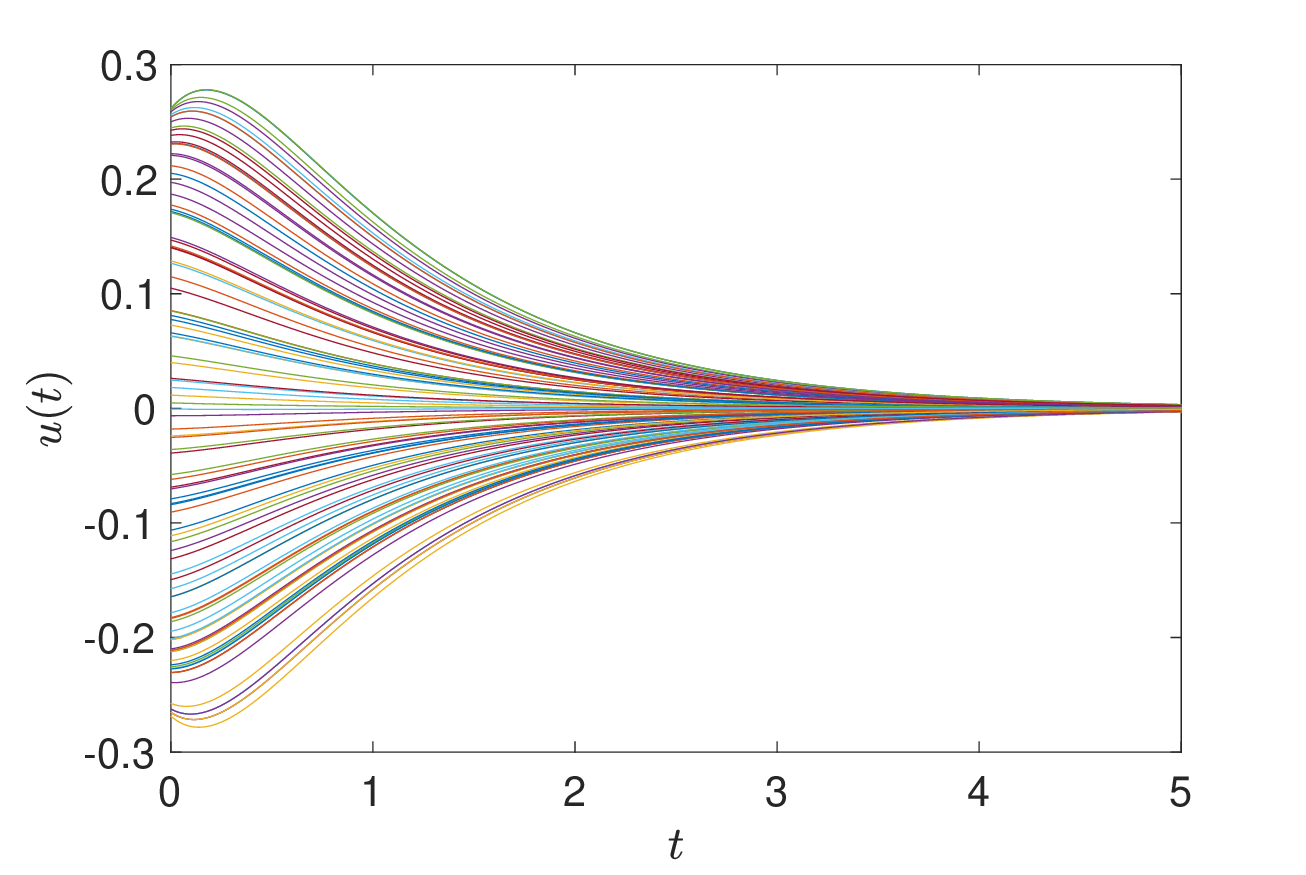}
\includegraphics[width=0.24\textwidth]{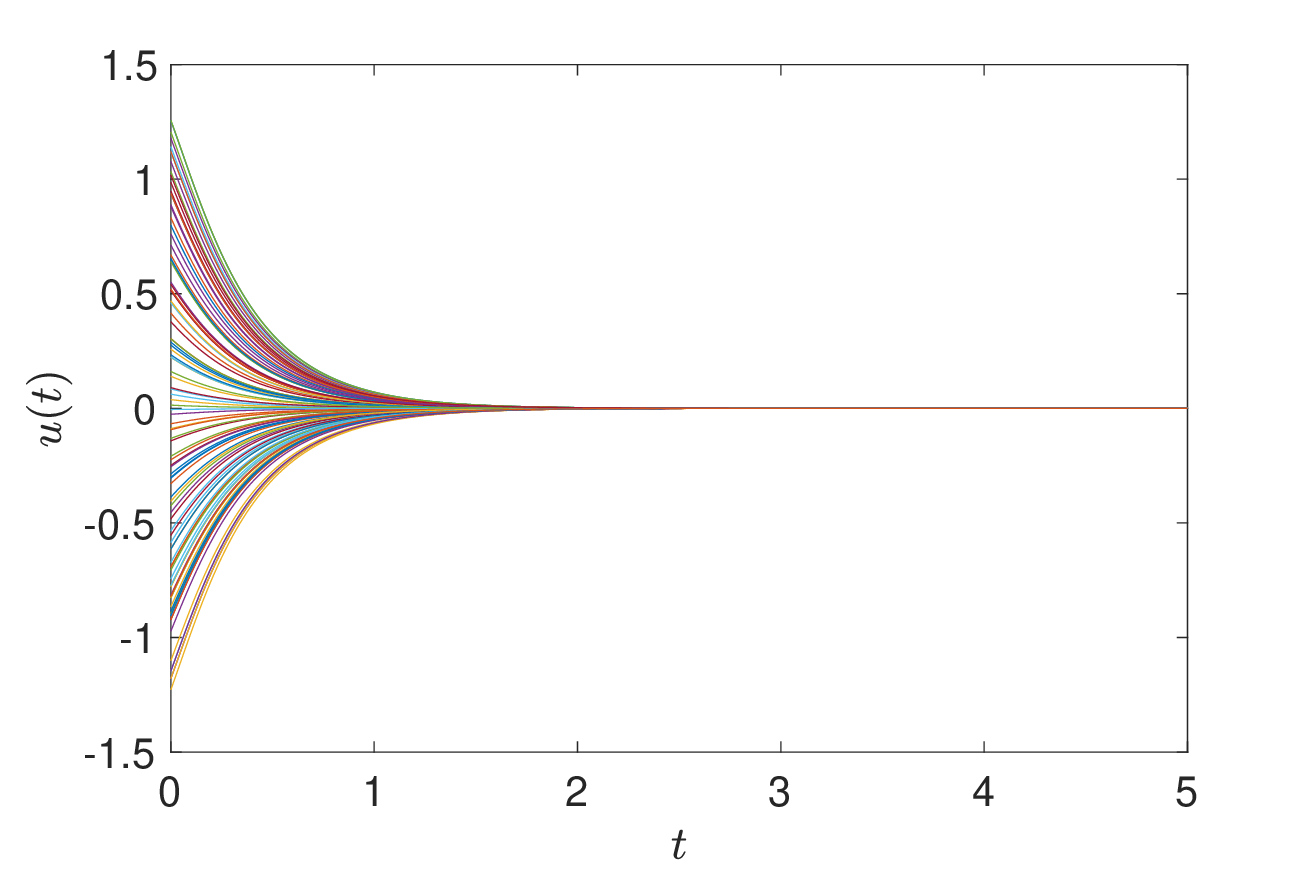}
\includegraphics[width=0.24\textwidth]{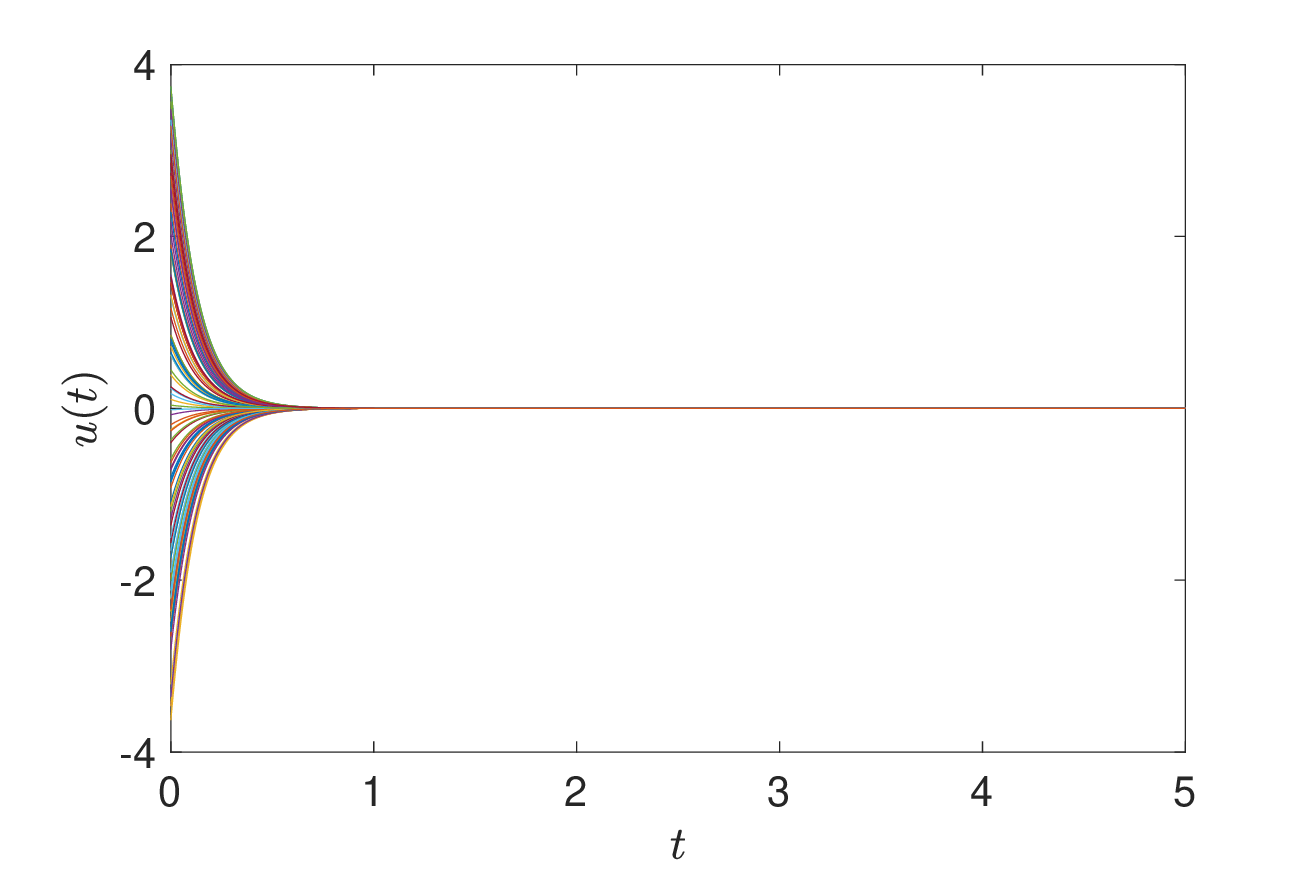}
\includegraphics[width=0.24\textwidth]{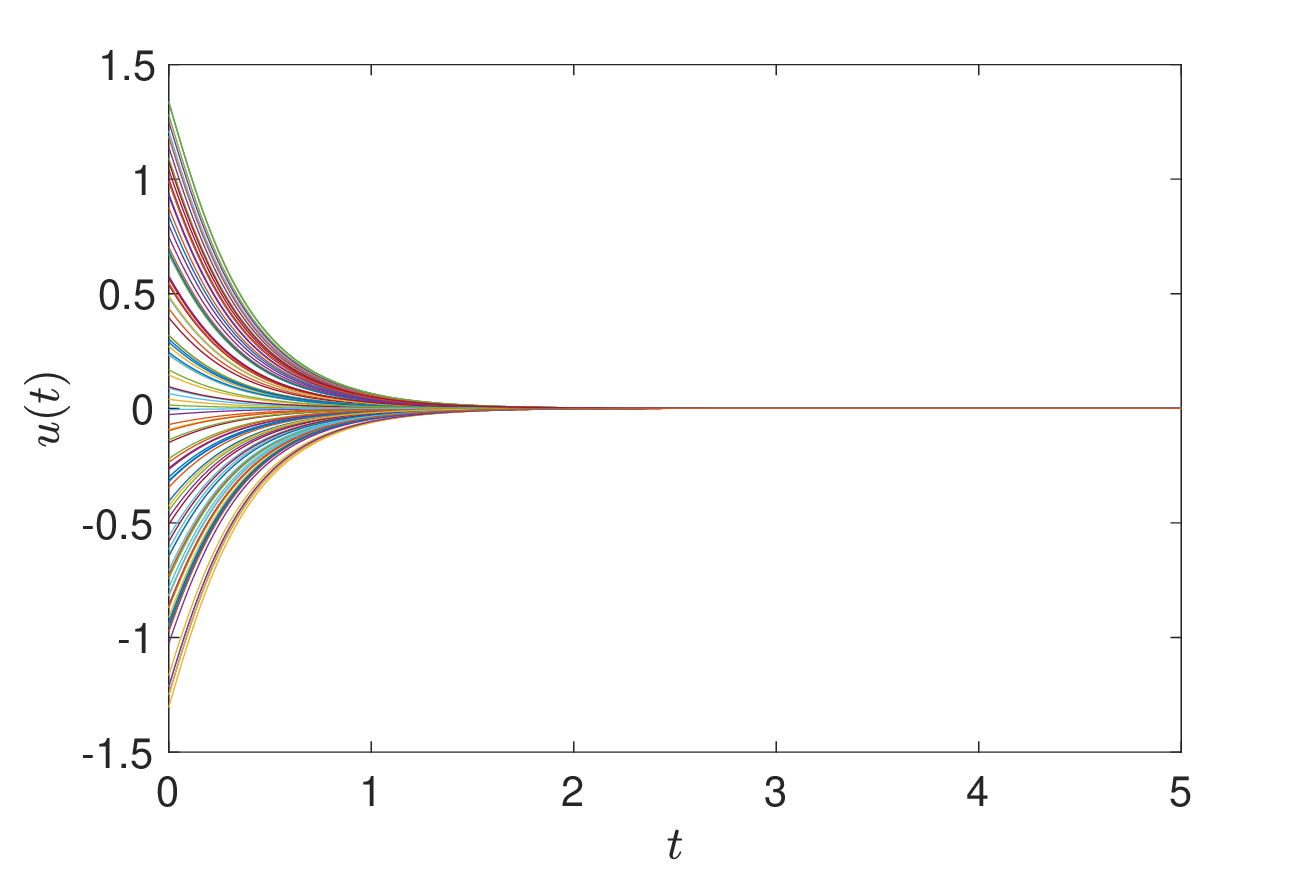}
\caption{\textbf{Test 1.} From left to right, evolution of states (top) and controls (bottom) for $N=100$ agents which are steered by cheap control with $\beta=1,3,8$ and optimal control respectively.
The larger the value of $\beta$, the faster the agents reach the desired state.}
\label{fig:T1_b}
\end{center}
\end{figure}
%%%%%%%%%%%%%-----------------------------------------figure 2-3
\begin{figure}[t]
\begin{center}
\includegraphics[width=0.4\textwidth]{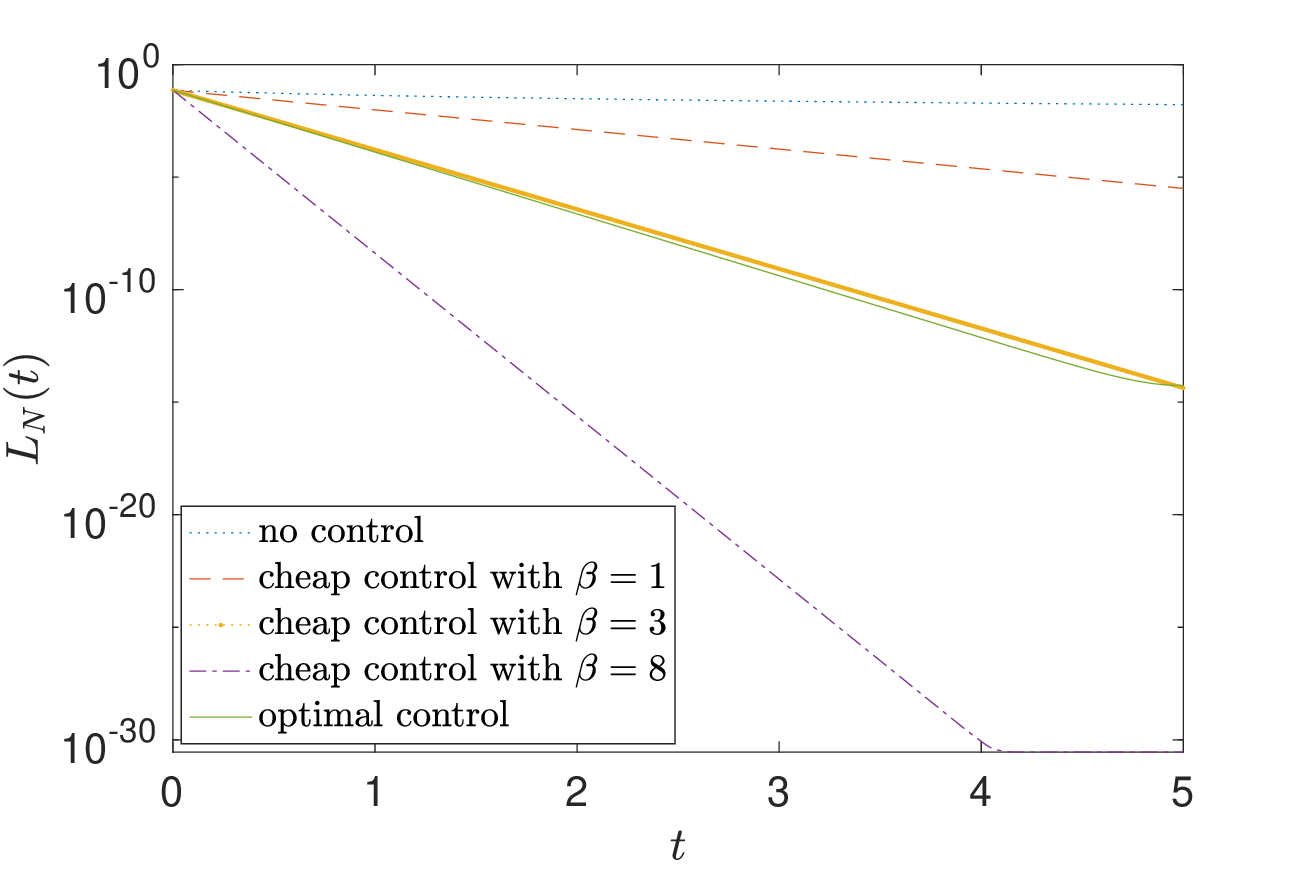}
\includegraphics[width=0.4\textwidth]{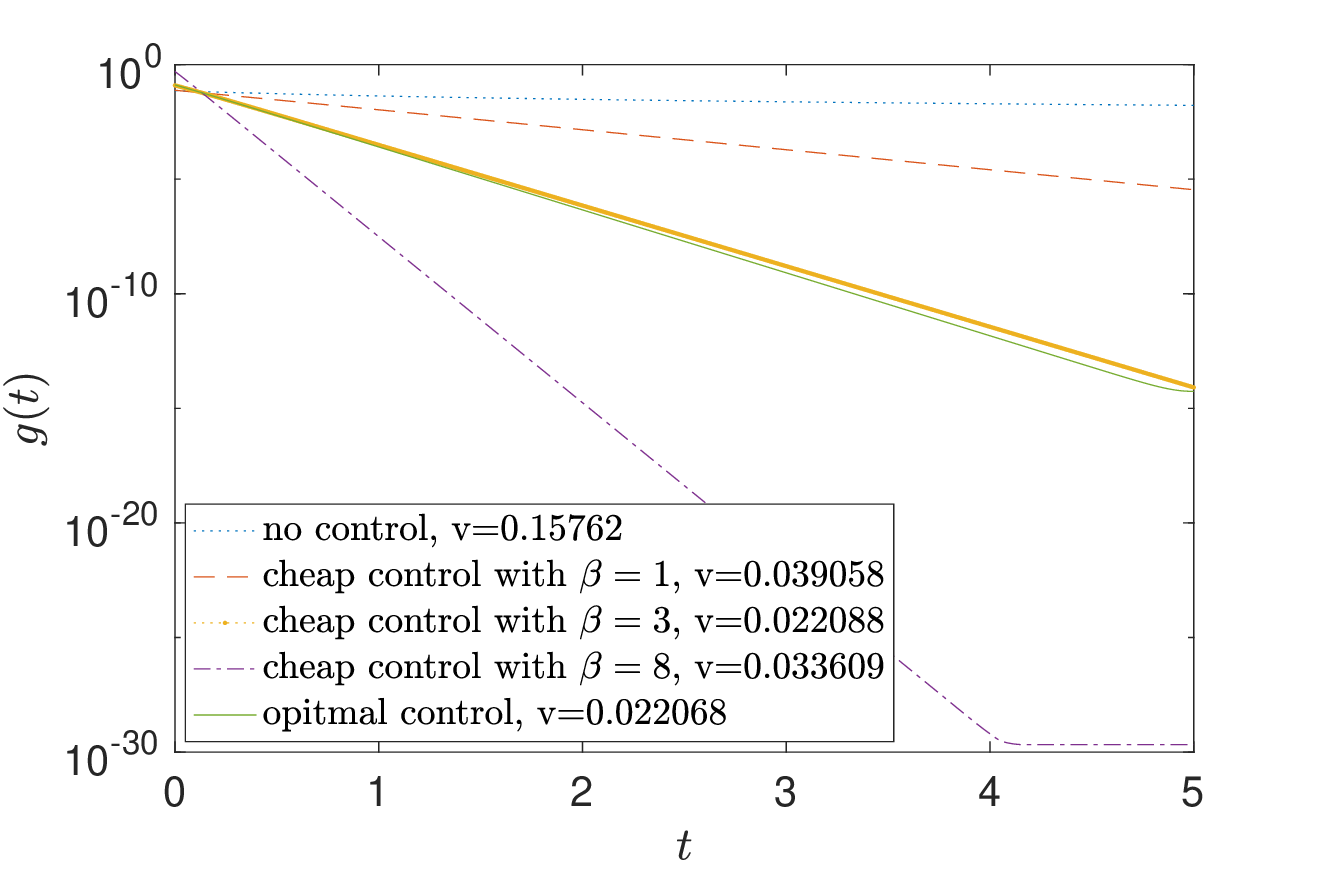}
\\
\includegraphics[width=0.8\textwidth]{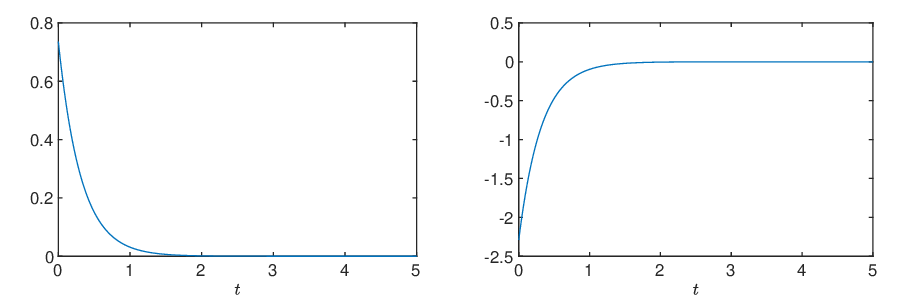}
\caption{\textbf{Test 1.} On the top, the Lyapunov functions on a logarithmic scale (left), and the running cost over time (right). On the bottom, the turnpike property with interior decay, i.e. the gap between the optimal and static states of all particles
$\sum_{k=1}^{N}\psi^{i}_{k} - \sum_{k=1}^{N}\bar{\psi} $ (left) and the gap between optimal and static control
$\sum_{k=1}^{N}u^{i}_{k}$ (right) over time.}
\label{fig:T1_c}
\end{center}
\end{figure}

\paragraph{Test 2: Comparison for different time-steps.} \label{par:test2}

Our numerical study delves into an exploration of the turnpike property, observing its behavior across various discretization levels. Since it is a close approximation of the optimal solution, we implement the cheap control with $\beta=3$. In Figure \ref{fig:T2_a}, we show the trajectories of both state and control variables, providing a representation of their evolution under different time-discretization steps $h=0.1, , 0.01, , 0.001$.

Furthermore, Figure \ref{fig:T2_b} depicts the logarithmic scale representation of the exponential decay of $L_N$ for each discretization level. This offers valuable insights into the system's behavior, providing a more refined understanding of the turnpike property and the efficacy of our implemented control strategy.

\begin{figure}[t]
\begin{center}
{\includegraphics[width=0.24\textwidth]{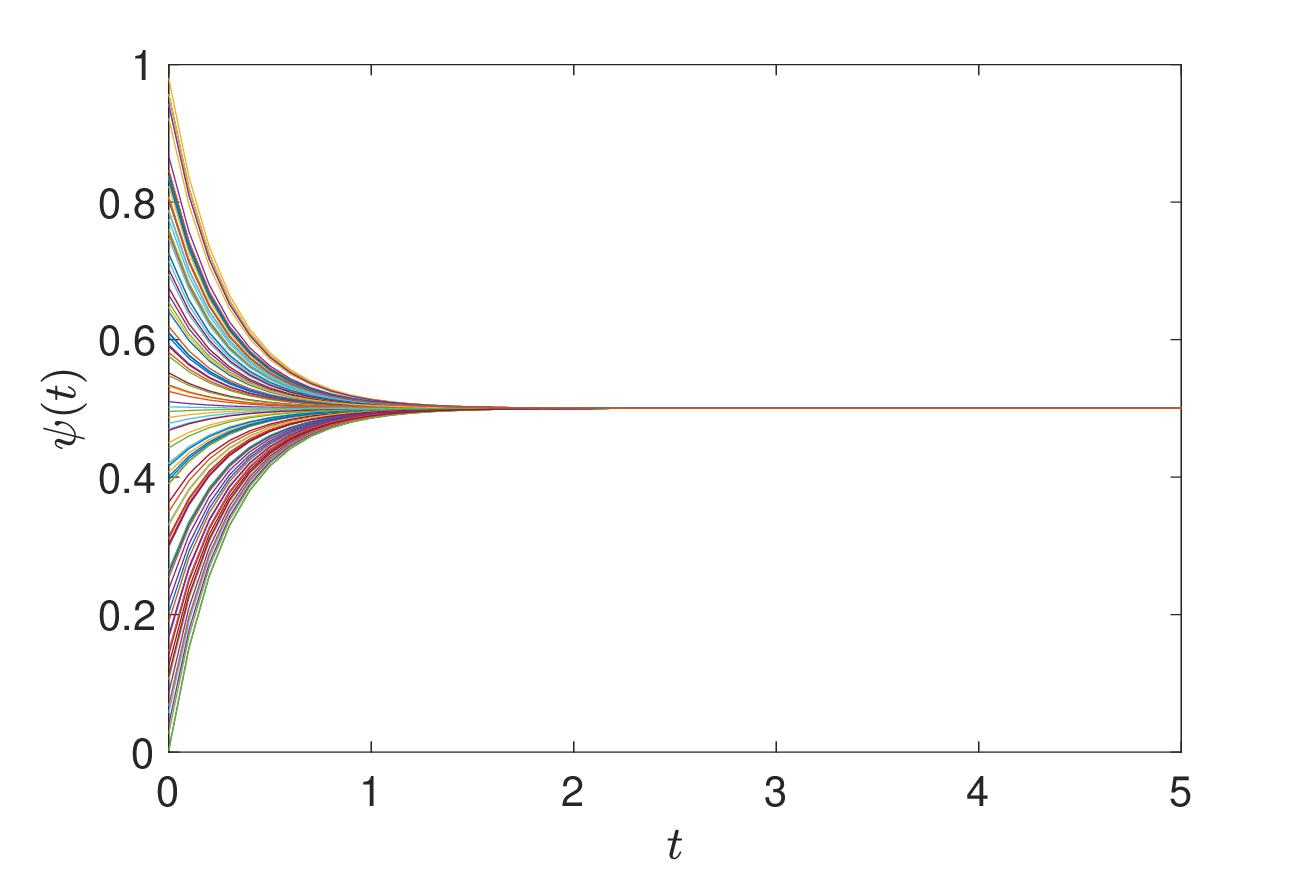}}
{\includegraphics[width=0.24\textwidth]{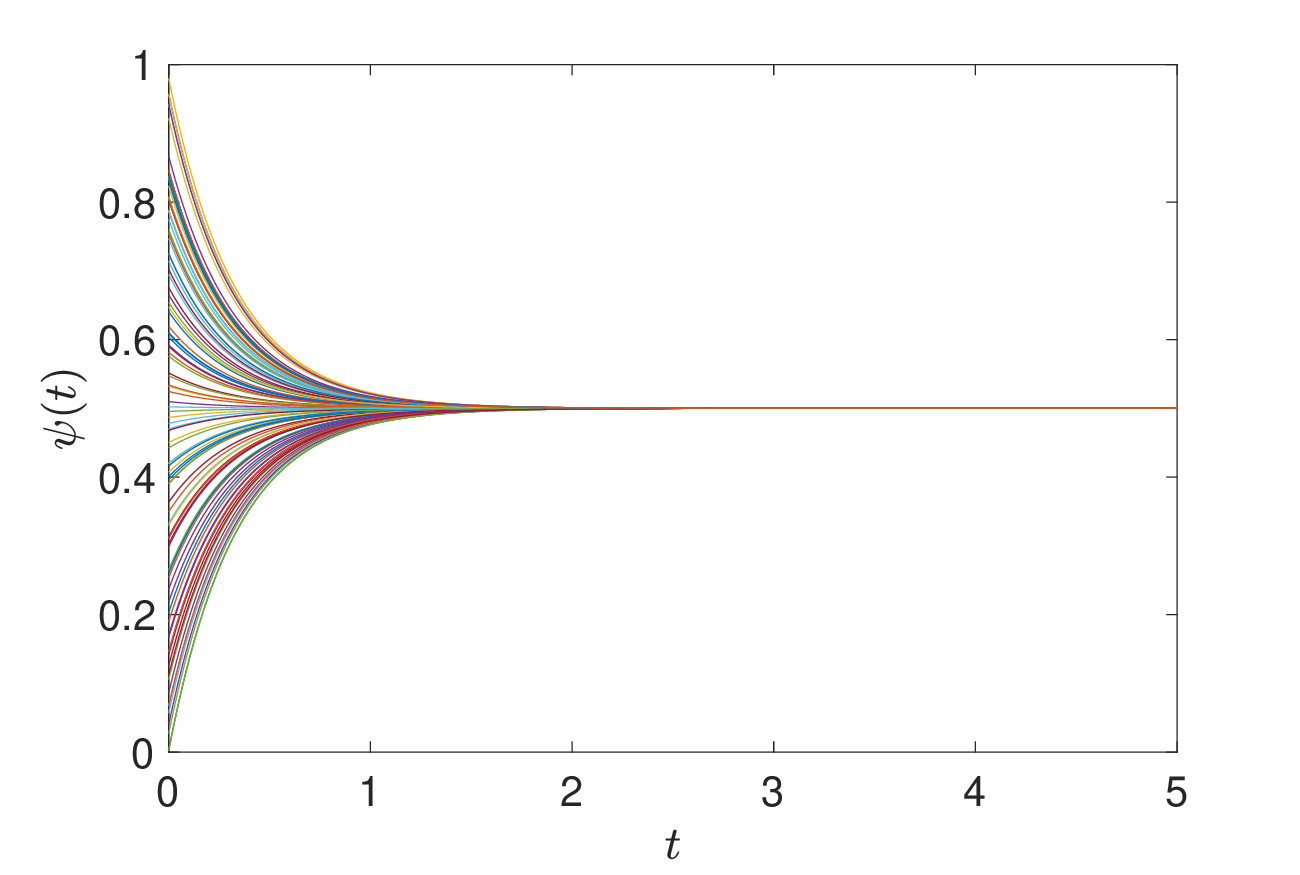}}
{\includegraphics[width=0.24\textwidth]{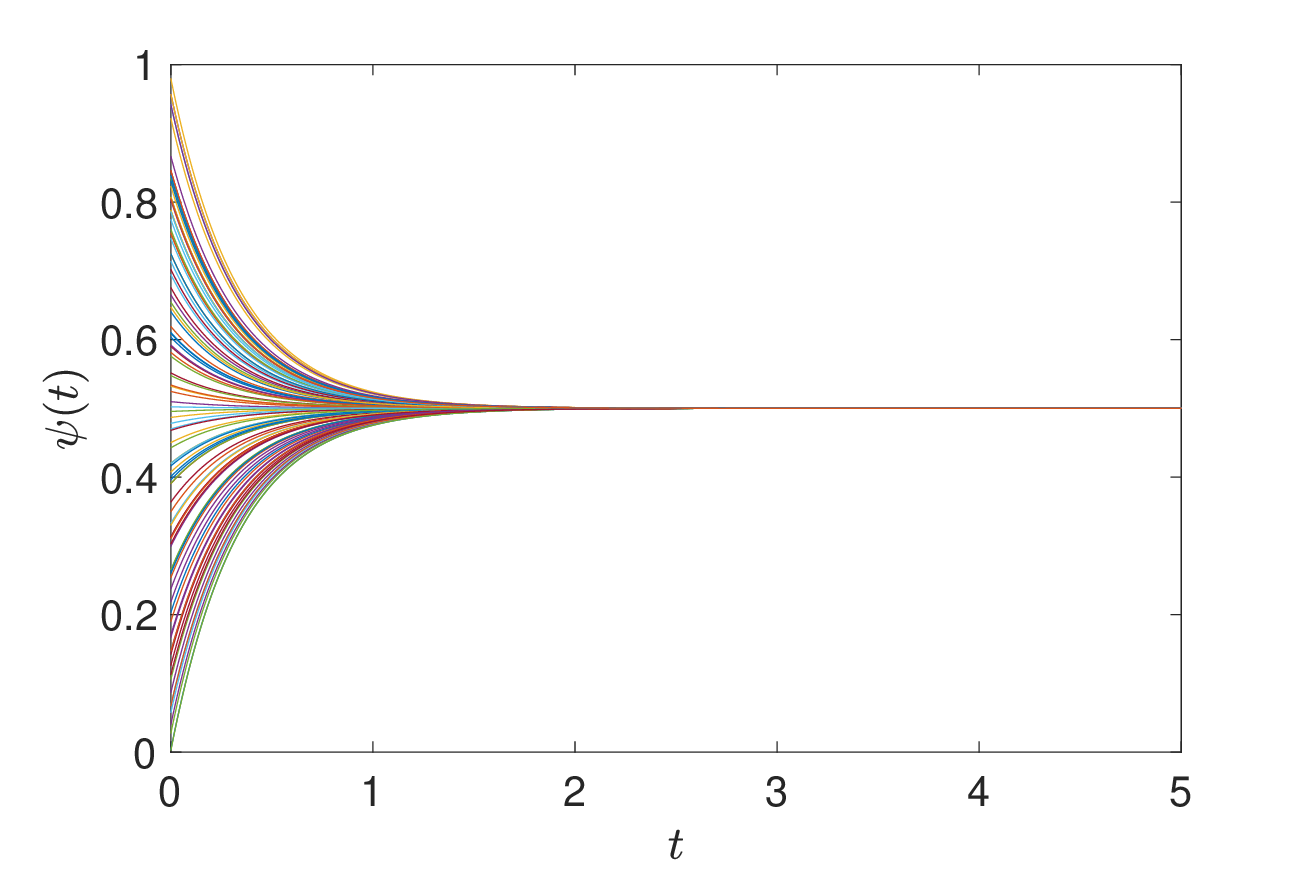}}
\\
{\includegraphics[width=0.24\textwidth]{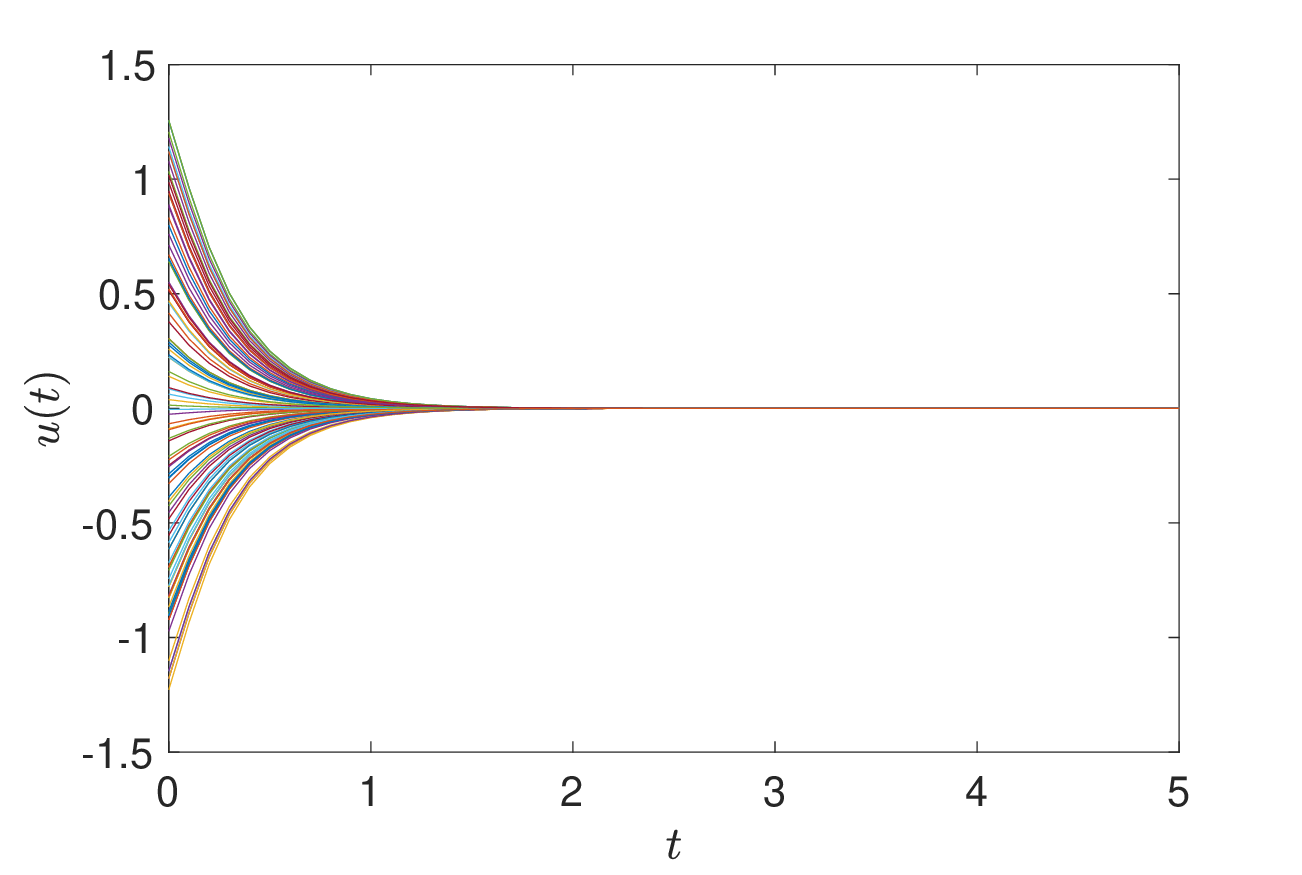}}
{\includegraphics[width=0.24\textwidth]{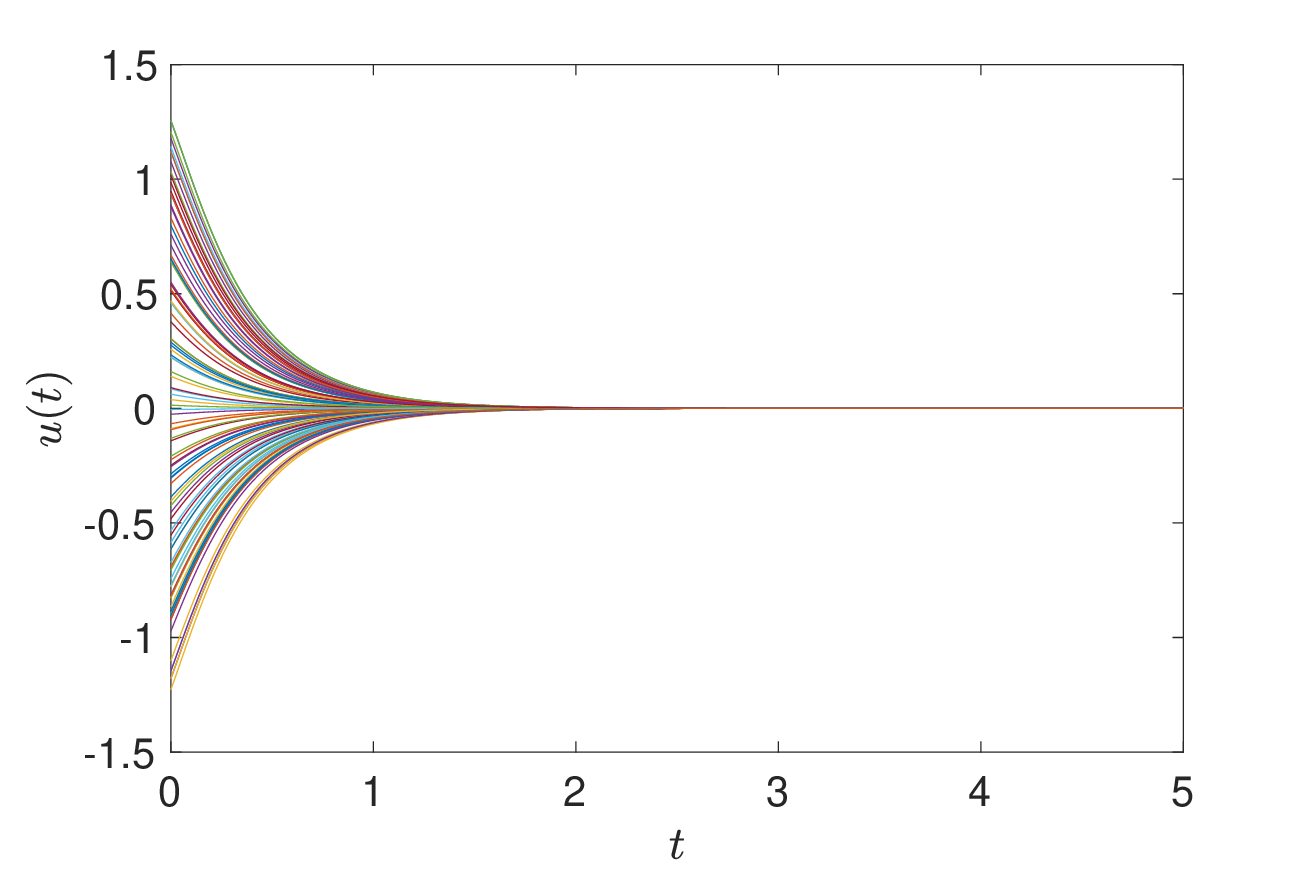}}
{\includegraphics[width=0.24\textwidth]{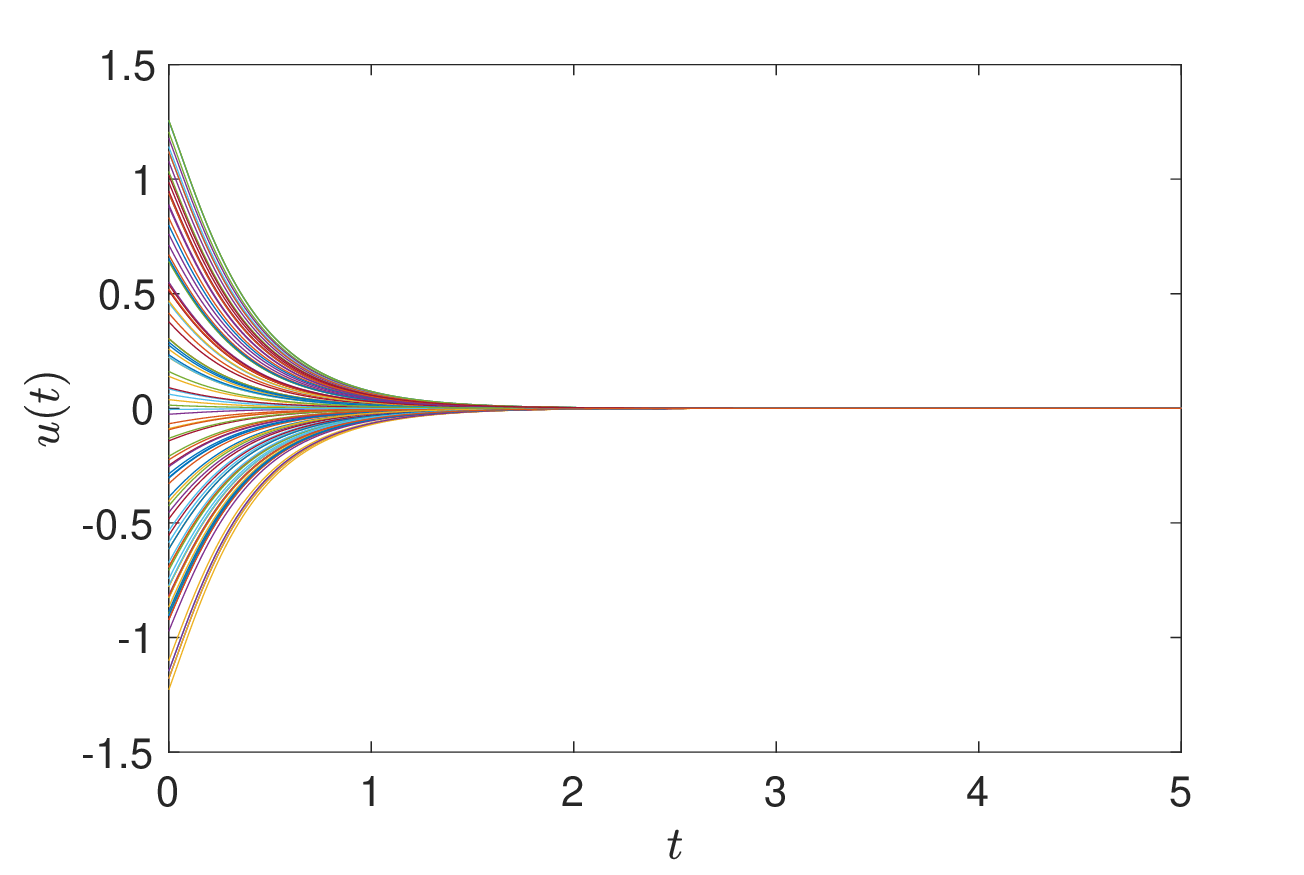}}
\caption{ {\textbf{Test 2.} From left to right, trajectories of state (top) and control (bottom) for discretization levels $h=0.1, 0.01, 0.001$.}}
\label{fig:T2_a}
\end{center}
\end{figure}
%%%%%%%%%%%%%-----------------------------------------figure 5
\begin{figure}[t]
\begin{center}
{\includegraphics[width=0.4\textwidth]{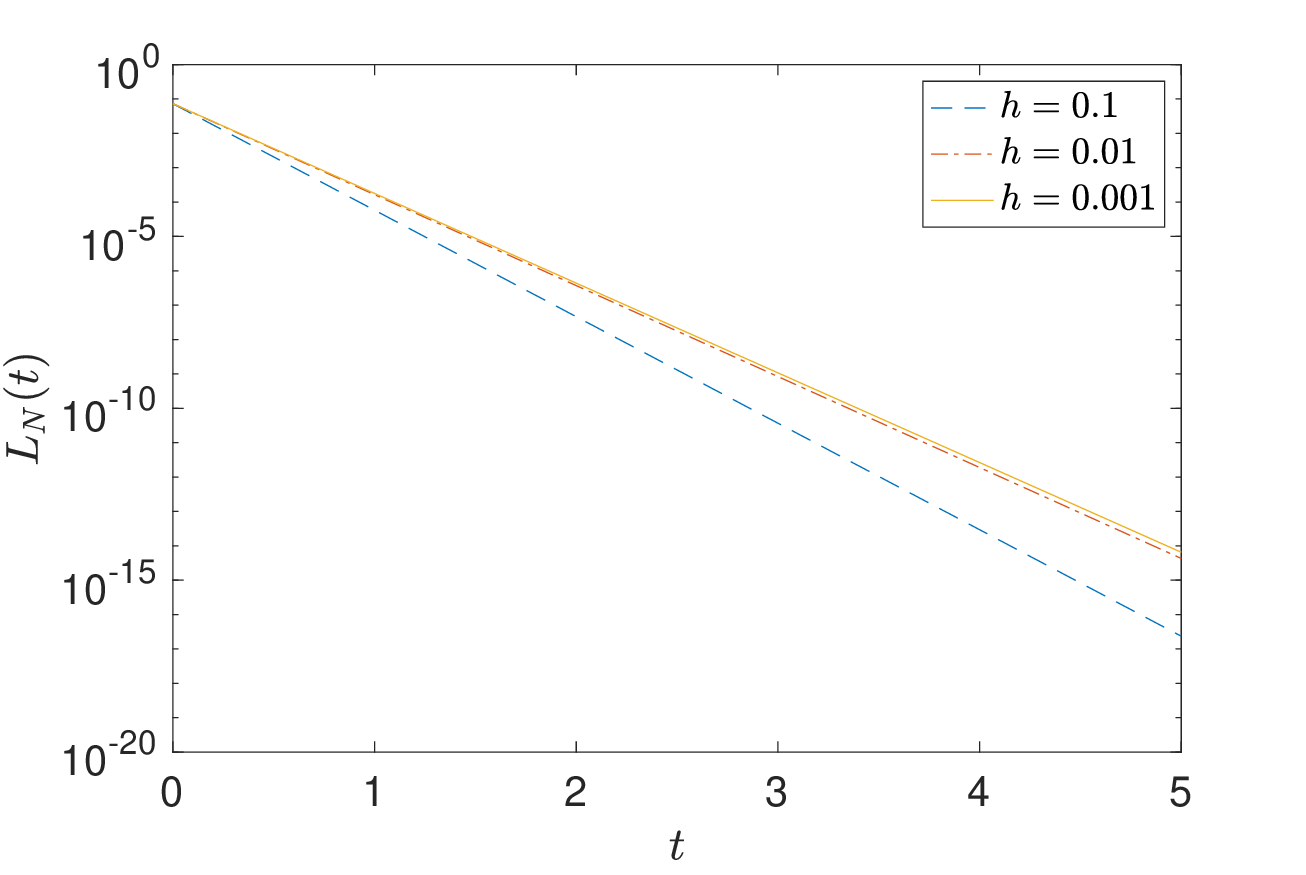}}
\end{center}
\caption{{\textbf{Test 2.} The exponential decay of $L_N$ for different discretization levels $h=0.1, 0.01, 0.001$.}}
\label{fig:T2_b}
\end{figure}

\paragraph{Test 3: Comparison for increasing numbers of agents.}

%Especially concerning the work 
From the results 
%detailed 
in \cite{main}, uniformity with respect to the parameter $N$ is expected. Also in the case of a time discrete system, we have the emergence of a turnpike phenomenon with interior decay, which remains unaffected by variations in the number of agents $N$. Consequently, our investigation entails the generation of plots for $N$ values ranging from $50$ to $500$, showing consistent patterns across the spectrum.

As in Test 2, we implement the cheap control mechanism with a specified parameter value of $\beta=3$, given its close approximation to the optimal solution. Figure \ref{fig:T3_N1} shows the trajectories of both state and control variables, providing their evolution with increasing $N$ values, namely $50, 100$, and $500$.

In Figure \ref{fig:T3_N2}, a logarithmic scale representation illustrates the exponential decay of $L_N$ across various agent counts. This visualization provides again the effectiveness of our implemented control strategy, even in the presence of a considerable number of agents. In particular, we can observe, also numerically, a very similar behaviour for the evolution of $\psi, u$ and the decay of $L_N$ in time, with different number of agents.

\begin{figure}[t]
\begin{center}
{\includegraphics[width=0.24\textwidth]{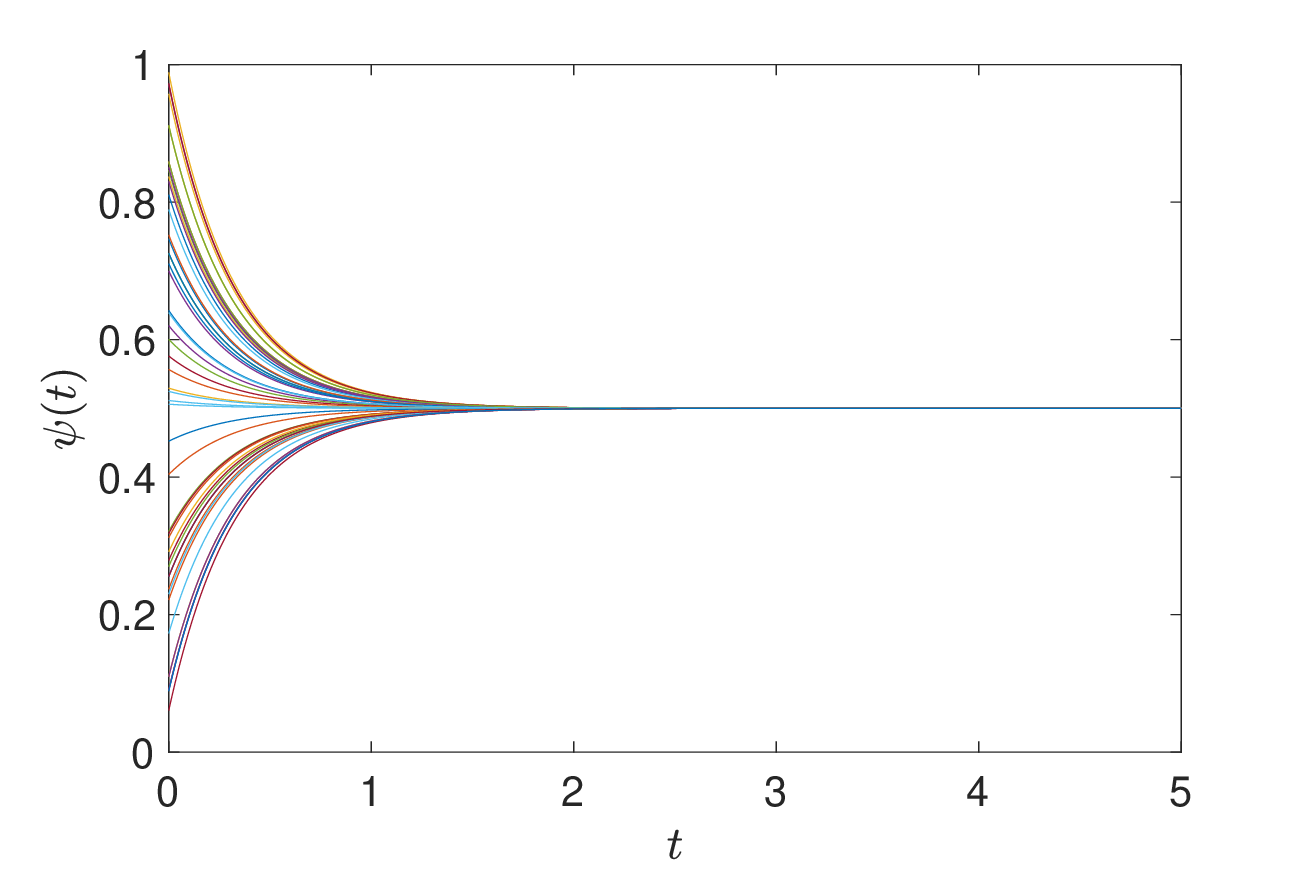}}
{\includegraphics[width=0.24\textwidth]{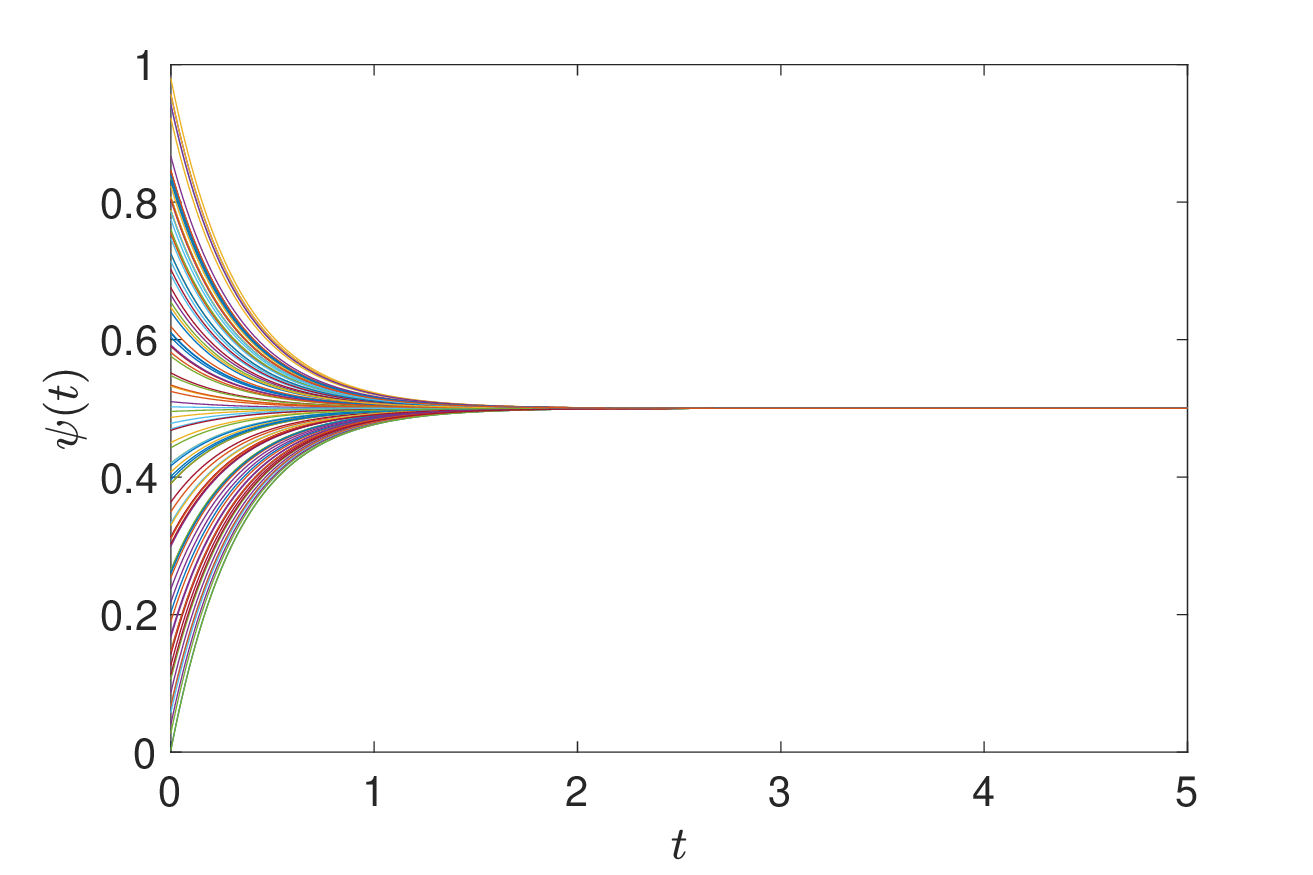}}
{\includegraphics[width=0.24\textwidth]{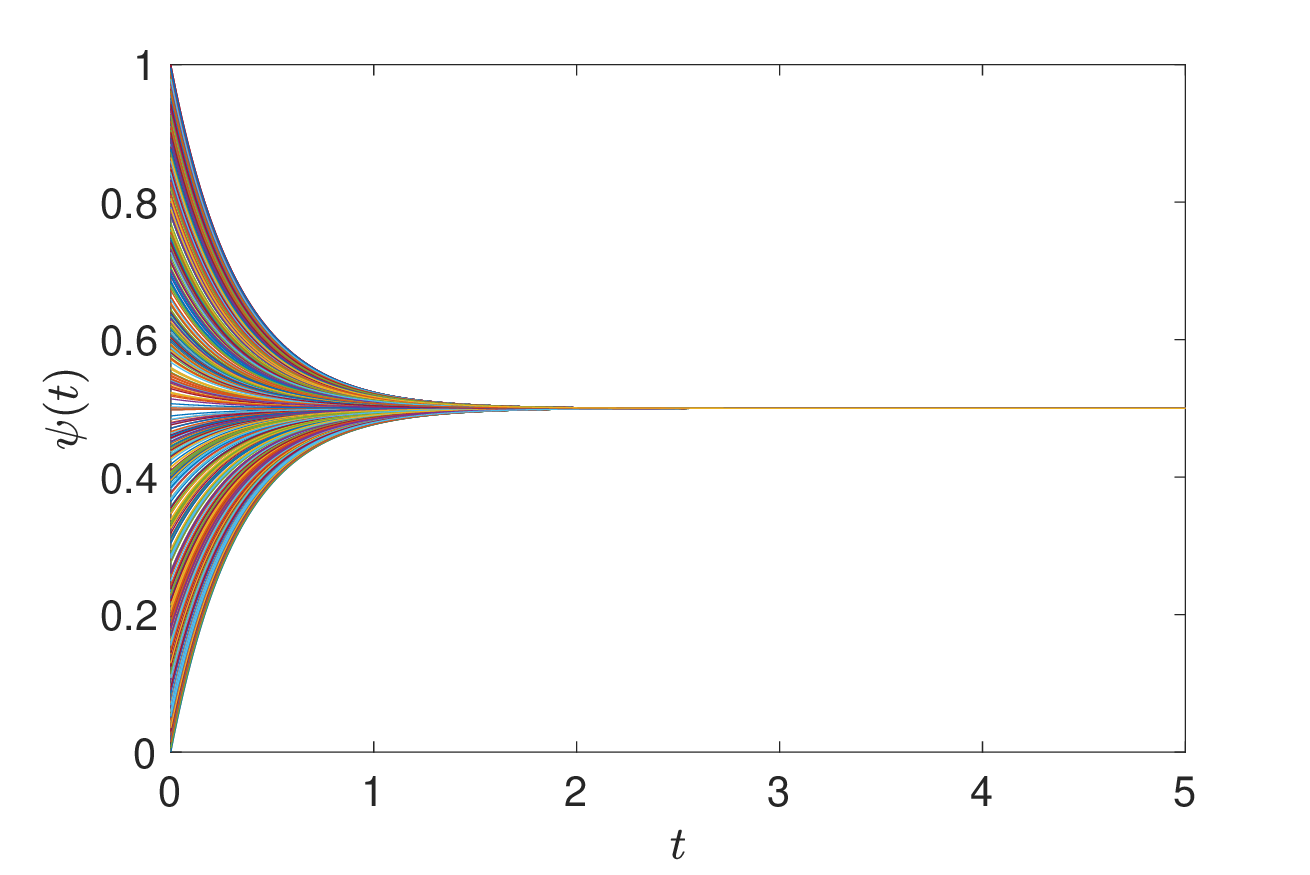}}
\\
{\includegraphics[width=0.24\textwidth]{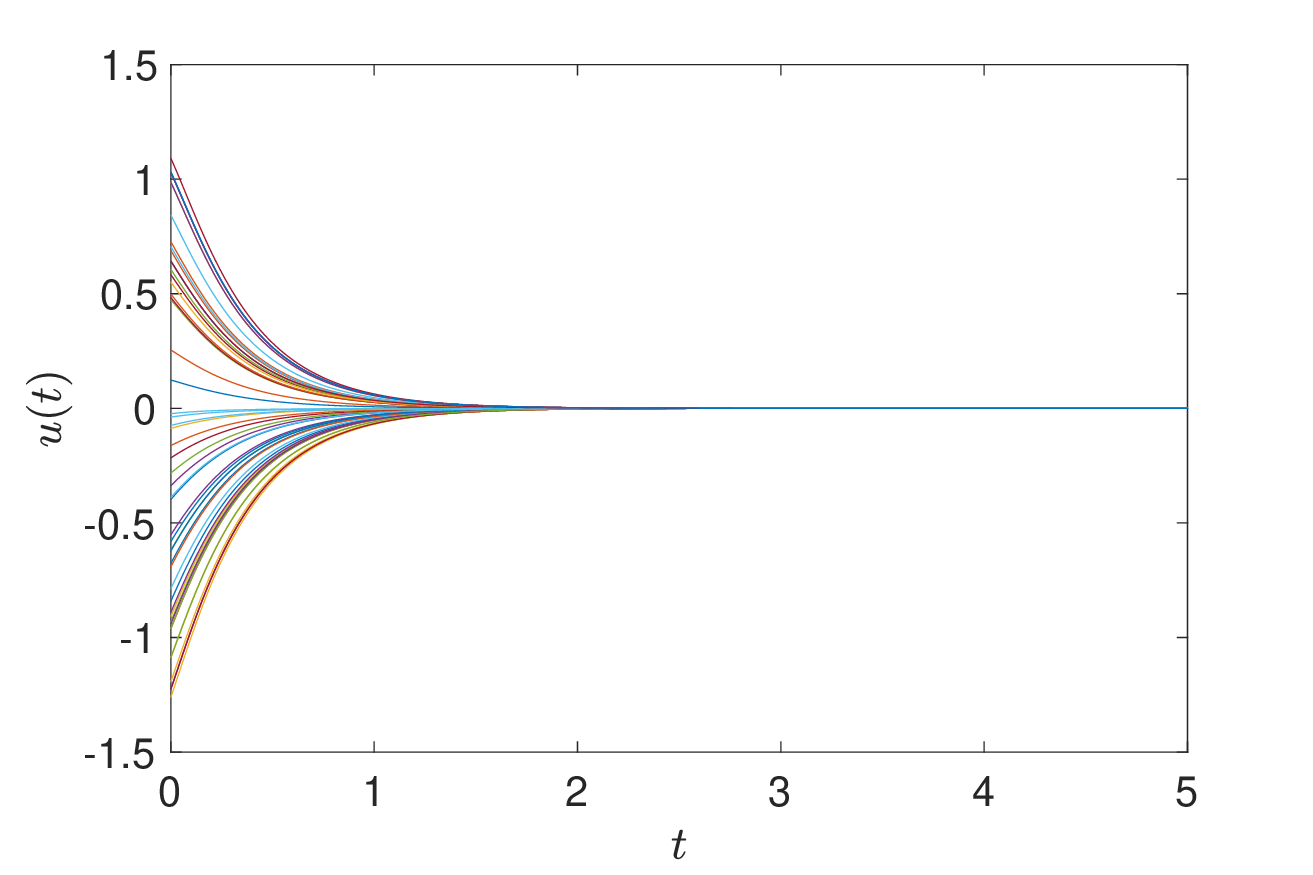}}
{\includegraphics[width=0.24\textwidth]{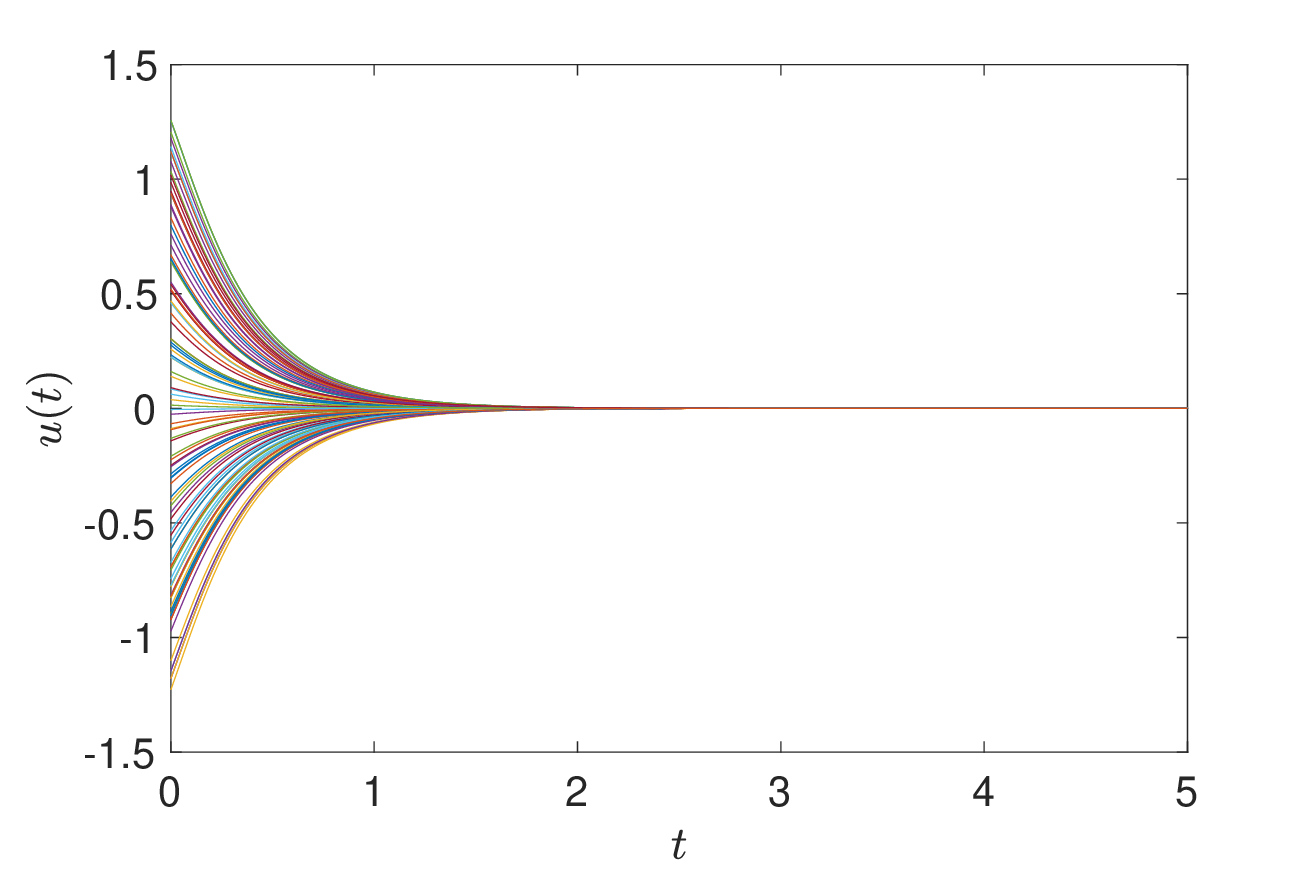}}
{\includegraphics[width=0.24\textwidth]{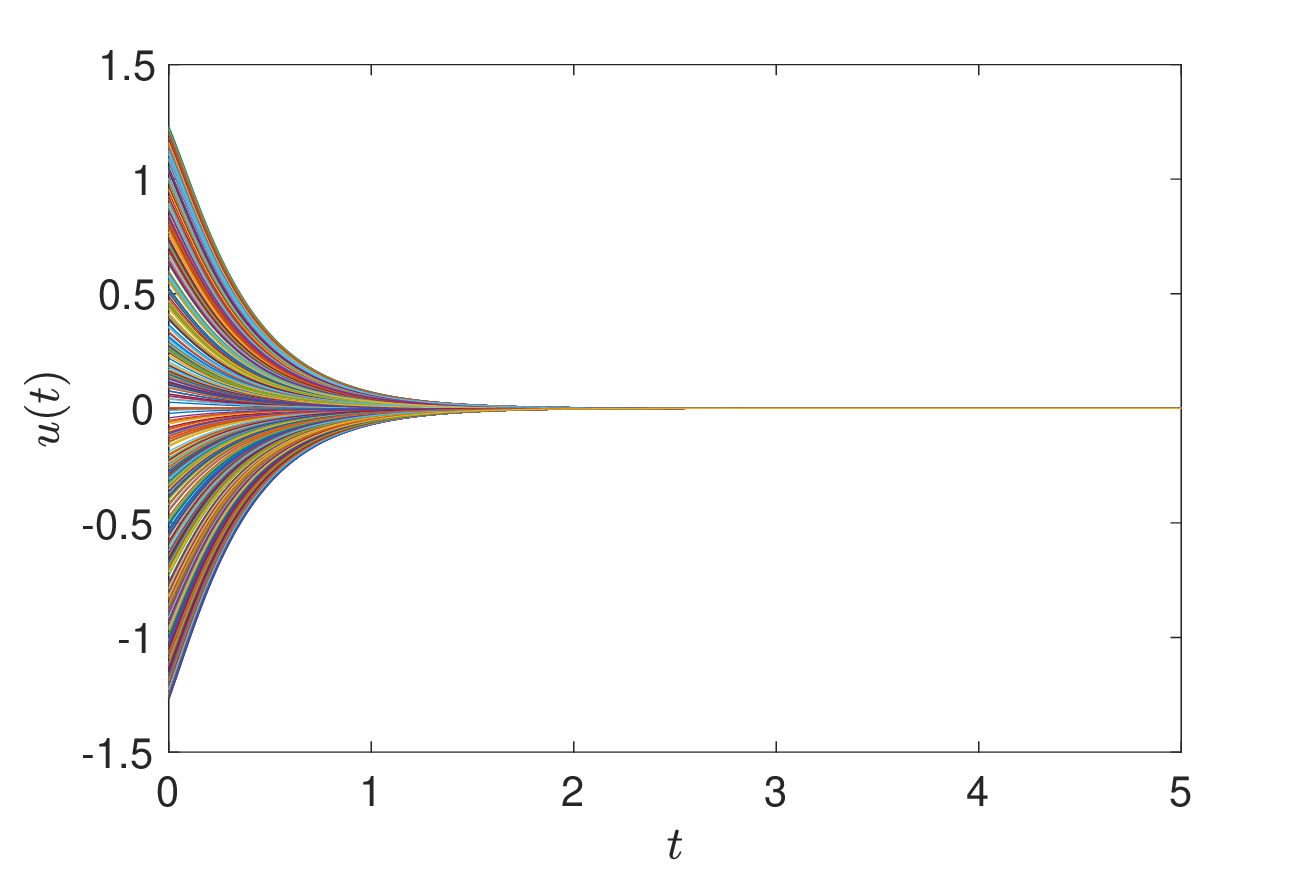}}
\caption{ {\textbf{Test 3.} From left to right, trajectories of state (top) and control (bottom) for an increasing number of agents $N=50, 100, 500$.}}
\label{fig:T3_N1}
\end{center}
\end{figure}
%%%%%%%%%%%%%-----------------------------------------figure 5
\begin{figure}[t]
\begin{center}
{\includegraphics[width=0.4\textwidth]{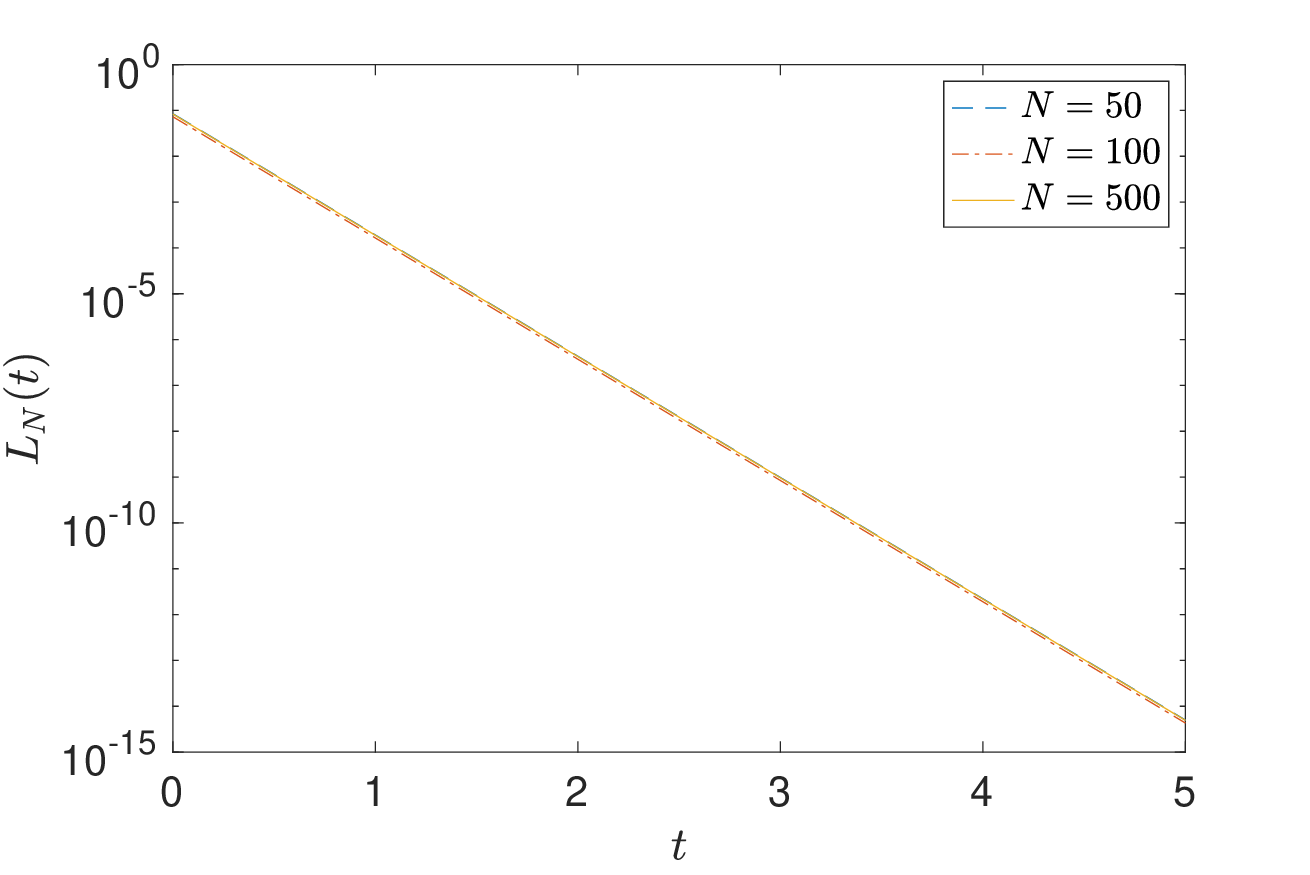}}
\end{center}
\caption{{\textbf{Test 3.} The exponential decay of $L_N$ for increasing number of agents $N=50, 100, 500$.}}
\label{fig:T3_N2}
\end{figure}

%test 3

\paragraph{Test 4: Non-differentiable interaction kernel.}

Concluding our investigation, we turn our attention to the observation of the turnpike property in systems characterized by a non-differential interaction kernel. Specifically, we consider $P(x,y)=\vert x-y \vert$. Implementing a chep control strategy with $\beta=3$, the trajectories of the controlled system are depicted in Figure \ref{fig:T3_b}. 

Figure \ref{fig:T3_c} provides a complementary examination by showing the exponential decay of the Lyapunov function associated with the uncontrolled and cheap controlled trajectories.

By exploring both trajectory patterns and Lyapunov function decay, we add some insights into the behavior and stability of controlled systems, extending its applicability to scenarios involving non-differential interaction kernels.

\begin{figure}[t]
\begin{center}
	{\includegraphics[width=0.24\textwidth]{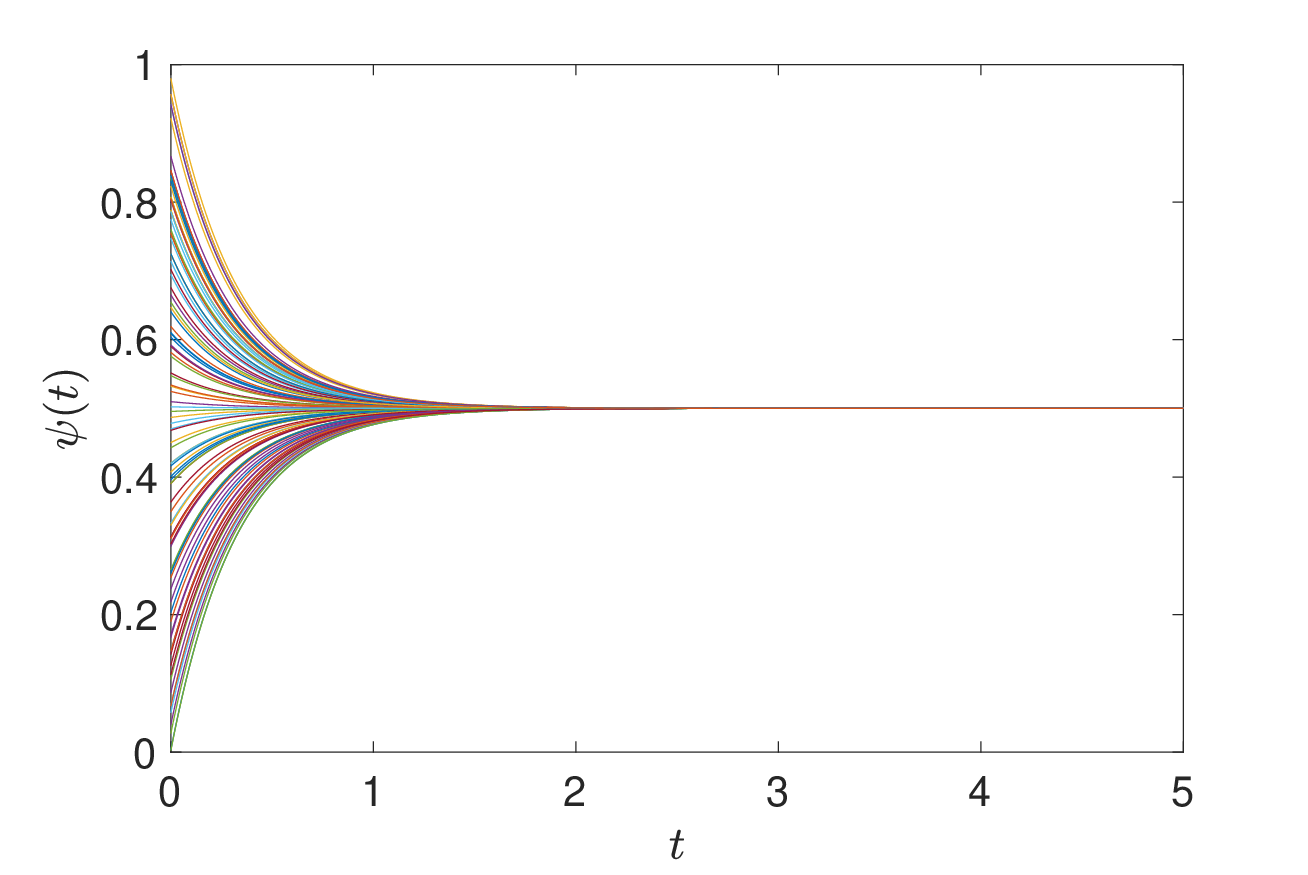}}
 \\
	{\includegraphics[width=0.24\textwidth]{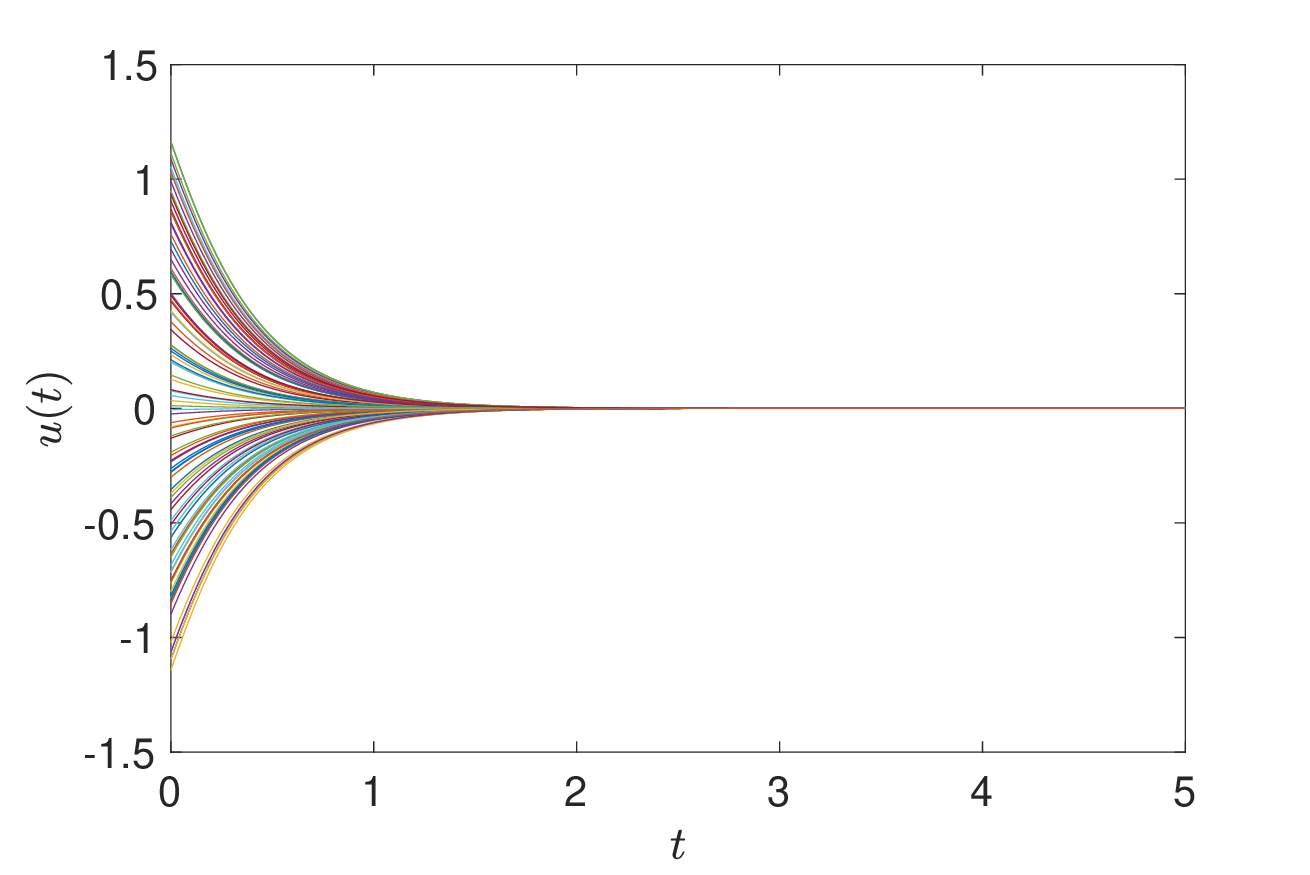}}
	\caption{{\textbf{Test 4.}
			Time evolution of cheap controlled ($\beta=3$) trajectories for a non-differentiable interaction kernel $P(x,y)=\vert x-y \vert$.}}
	\label{fig:T3_b}
\end{center}
\end{figure}
%%%%%%%%%%%%%%%%%%%-----------------------------------------figure 7
\begin{figure}[t]
\begin{center}
{\includegraphics[width=0.4\textwidth]{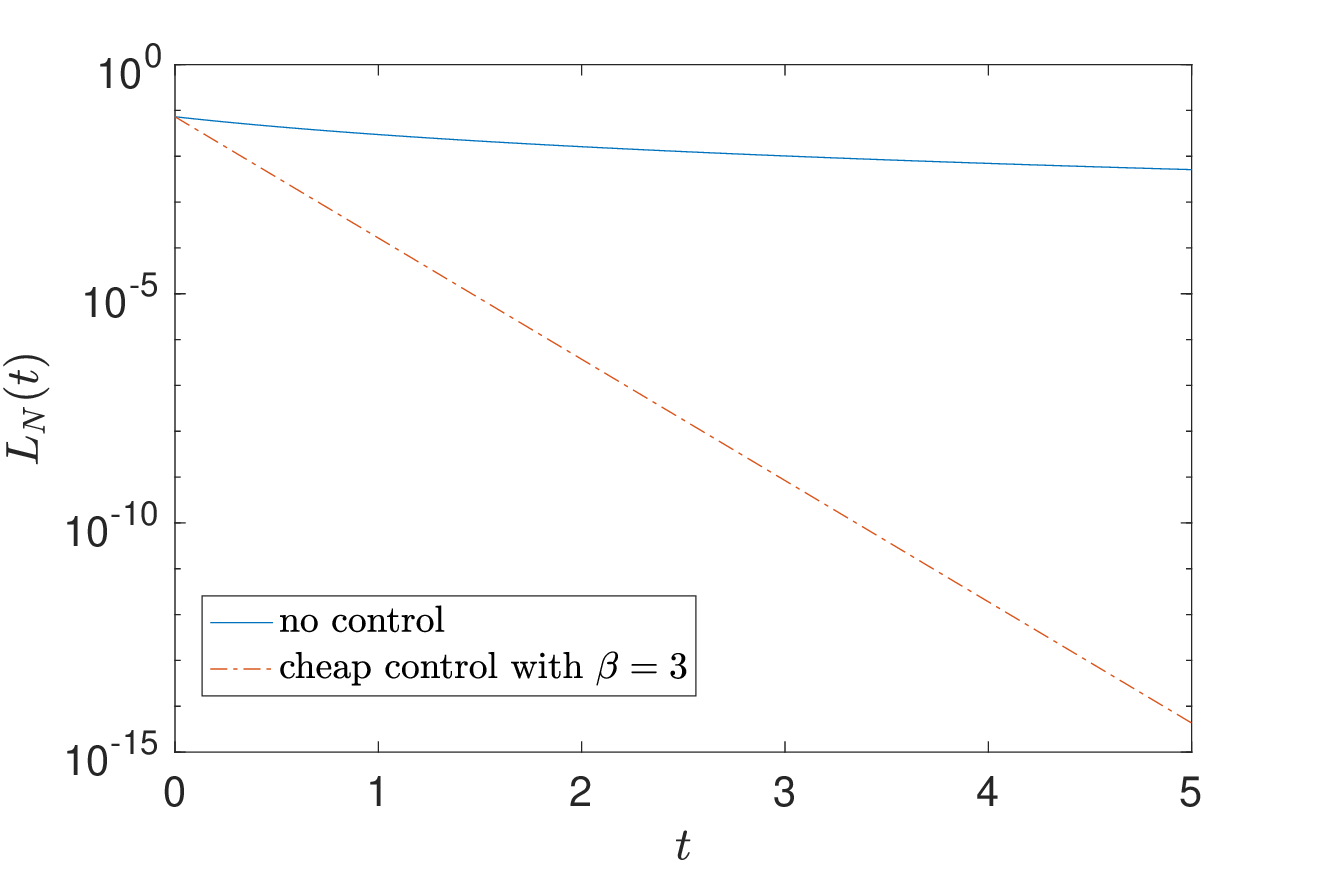}}
\caption{{\textbf{Test 4.} The exponential decay of $L_N$ for uncontrolled and cheap controlled results.}}
\label{fig:T3_c}
\end{center}
\end{figure}

%%%%%%%%%%%%%%%%%%%%%%%%%%%%%%%%%%%%%%%%%%%%%%%%%%%%%%%%%%%%%%%%%%%%%%%%%%%%%%%%%%%%%%%%%%%%%%%%%%%%%%%%%%%%%%%%%%%%%%%%%%%%%%%%%%%%%%%%%%%%%%%%%%%%%%%%%%%%%
% this has already been modified.
\section{Conclusion}

The continuous-time multi-agent optimal control problem has been discretized and the turnpike property has been established for the discrete-time case. 
Without additional assumptions on the problem, the turnpike property is independent of the step size $h$ of the discretization and therefore holds uniformly in $h.$ This shows that the turnpike property is inherited by suitable numerical discretization schemes.  The proof relies on an extension of the strict dissipativity and the cheap control inequality for the discrete system. Numerical tests  illustrate the turnpike phenomenon and support the theoretical findings. Future research includes the treatment of other integrators in time, in particular, symplectic integrators that are suitable for long time horizons. Another interesting aspect is the time-discrete case for the large agent limit, since we expect that the turnpike property still holds in the limit of infinitely many agents as in \cite{main}.
Also the study of second order models is of interest, see \cite{piccoli2016sparse}.

%A discrete--time optimal control problem governed by collective dynamics to represent the enforcing consensus of multi-agents. We prove that this optimal control problem is strictly dissipative and the cheap control assumptions hold true for the discrete system. This allows us to deduce  the turnpike property with interior decay.  Such a turnpike structure describes that the distance between the dynamic optimal solution and the corresponding static optimal solution is small enough on a sufficiently large time horizon. Numerical tests  illustrate the turnpike property in some examples and support the theoretical findings.
%We have also shown that the cheap control condition holds
%uniformly with respect to the stepsize $h$,
%which implies that also the  constants in the turnpike inequality
%are uniformly bounded with respect to $h$.
%\begin{itemize}
%    \item $N\to \infty$
%    \item Second-order models
%    \item Other time integrators.
%\end{itemize}

\subsubsection*{Acknowledgments}
{\small The authors thank the Deutsche Forschungsgemeinschaft (DFG, German Research Foundation) for the financial support through 442047500/SFB1481 within the projects B04 (Sparsity fördernde Muster in kinetischen Hierarchien), B05 (Sparsifizierung zeitabhängiger Netzwerkflußprobleme mittels diskreter Optimierung) and B06 (Kinetische Theorie trifft algebraische Systemtheorie) and through SPP 2298 Theoretical Foundations of Deep Learning  within the Project(s) HE5386/23-1, Meanfield Theorie zur Analysis von Deep Learning Methoden (462234017). C. Segala ia a member of the Italian National Group of Scientific Calculus (Indam GNCS).
This work was also funded by the DFG, TRR 154, \emph{Mathematical Modelling, Simulation and Optimization Using the Example of Gas Networks}, project C03 and C05, Projektnr. 239904186.
Support funding received from the European Union's Horizon Europe research and innovation programme under the Marie Sklodowska-Curie Doctoral Network Datahyking (Grant No. 101072546).
}

\bibliographystyle{siam}
\bibliography{referencesnmr.bib}
	
\end{document}